\documentclass[a4paper]{article}
%\usepackage[nojosa]{hangul}%
%% ==================== ������ ��ġ�� ���ʽÿ�. =======================
\usepackage{kdiss_eng}%

\usepackage[T1]{fontenc}
\usepackage{amsmath,graphicx,chicago}
\usepackage{amssymb}
\usepackage{multirow}
\usepackage{array}
\usepackage{hhline}
\usepackage[hidelinks]{hyperref}
\usepackage{kotex}
\usepackage{natbib}
%\submit{draft}
\submit{final}
\thefirstpage{1}
\receive{2021}{8}{11}
\revise{2021}{9}{24}%��������
\accept{0000}{0}{0}%ä������
\volumn{32}{6}
\newcommand{\bea}{\begin{eqnarray*}}
\newcommand{\eea}{\end{eqnarray*}}
%\newlength{\bibhang}
\setlength{\bibhang}{2em}
 %빨간색인 부분을 삭제한다는 의미

%% ====================================================================

\heading
{Identifiability of Covariance Kernels in the Gaussian Process Regression Model}
{Jaehoan Kim $\cdot$ Jaeyong Lee}

\begin{document}

\title{
Identifiability of Covariance Kernels in the Gaussian Process Regression Model
}

\eauthor{
Jaehoan Kim\footnote{Undergraduate Student, Department of Mechanical Engineering, Seoul National University, Gwanak, Seoul 08826, Korea. E-mail: k1mjh6561@snu.ac.kr}$~\cdot$
Jaeyong Lee\footnote{Professor, Department of Statistics, Seoul National University, Gwanak, Seoul 08826, Korea. E-mail: jylc@snu.ac.kr}
}

%  Institutes after author name
\eaddress{
    ${}^{1}$Department of Mechanical Engineering, Seoul National University \\
    ${}^{2}$Department of Statistics, Seoul National University
}

% \eaccepted

\begin{eabstract}
Gaussian process regression (GPR) model is a popular nonparametric regression model.
In GPR, features of the regression function such as varying degrees of smoothness and periodicities are modeled through combining various covarinace kernels, which are supposed to model certain effects (\citealp{bousquet2011advanced}; \citealp{gelman2013bayesian}). The covariance kernels have unknown parameters which are estimated by the EM-algorithm or Markov Chain Monte Carlo. The estimated parameters are keys to the inference of the features of the regression functions, but identifiability of these parameters has not been investigated.
In this paper, we prove identifiability of covariance kernel parameters in two radial basis mixed kernel GPR and radial basis and periodic mixed kernel GPR. We also provide some examples about non-identifiable cases in such mixed kernel GPRs.
\end{eabstract}

\ekeywords{Covariance, Gaussian Process, Identifiability, Kernel, Regression.}

\section{Introduction}

% 가우스 과정 회귀모형의 중요성 - Importance of Gaussian process regression

%% Abstract에서 GPR에 대한 설명을 기입해두긴 했는데, Introduction에서도 다시 한 번 정의해주는 것이 더 좋을까 하여 우선 남겨두었습니다.
Gaussian process regression (GPR) is a widely used nonparametric regression method and can be used in time series analysis.
In the GPR modeling, seasonal effects are modeled with periodic covariance kernel and the smoothness of the regression function is with radial basis function (RBF) kernel. Various expansions including Gaussian process with colored noise (\citealp{li2020hybrid}) were developed.

% Identifiability의 문제를 생각하는 이유, 중요성 - Importance of Identifiability problem in Gaussian process regression
When observed data are modeled with Gaussian process regression, it is often insufficient to use just one type of covariance kernel. Most of observed features of regression function can be modeled by superposition of several effects such as seasonalities, smoothness, etc. For example, \citet{gelman2013bayesian} modeled weekly, monthly and yearly effects, and short-term and long-term trends by adding covariance kernels of corresponding effects. In \citet{wilson2013gaussian}, several kernels are used for prediction with patterns. In addition to Gaussian process, mixed models are widely in use because of its flexibility (\citealp{choi2017poisson}; \citealp{noh2018analysis}).

With the data and mixed model assumption, one can conclude the leverage of each effect with several algorithms mentioned below. For example, in \citet{gelman2013bayesian}, daily birthrate from 1969 to 1988 is analyzed with mixed GPR model, including weekend effect, seasonal trend, etc. The effect of each term is provided with graphs. Although this result is obtained under reasonable assumptions and algorithms, the uniqueness of such conclusion is assumed without proof. Here, the identifiability of mixed model becomes crucial. With the assurance of identifiability, at last, one can obtain the credibility of the analysis.

In this paper, we prove the identifiability of RBF-periodic mixed kernel GPR model and 2-RBF mixed kernel GPR model under certain conditions of observed data. The RBF and periodic kernels in the GPR model are used to model trends and periodicities in the data, respectively. The RBF-periodic mixed kernel GPR model is for the data with a general trend and a periodicity. The 2-RBF mixed kernel GPR model is used to model the long-term and short-term trends separately. We give numerical examples of unidentifiable cases with several number of observations.

% 이 문제를 다룬 과거의 논문들과 내용 - mixed kernel GPR을 이용한 model들이 이용되는 사례 및 identifiability problem 

Started from \citet{teicher1963identifiability}, previous identifiability problems are mostly defined and dealt in functional sense (\citealp{yakowitz1968identifiability}; \citealp{kostantinos2000gaussian}; \citealp{atienza2006new}; \citealp{barndorff1965identifiability}), which emphasize analytic aspect.

Practically, as in \citet{bousquet2011advanced} and \citet{gelman2013bayesian}, mixed kernel GPR model is widely in use. The methodologies of the estimation of parameters in mixture models include maximizing the posterior probability distribution function (\citealp{gelman2013bayesian}; \citealp{daemi2019identification}; \citealp{bousquet2011advanced}), Laplace approximation (\citealp{bousquet2011advanced}), kernel smoothing (\citealp{huang2014estimating}), and the usage of specific form of prior probability density function to improve practical identifiability (\citealp{gelman2013bayesian}).

% 본 논문으로 기대되는 효과
In addition to these strategies, this paper provide a firm theoretical basis to justify such method’s validity in practical uses. Although the results are limited to mixtures of two kernels, the methodology used in this paper can offer the basis of further research.

% 논문의 구성 설명 - Organization of this paper
This paper is organized as follows. In Section \ref{sec:pre}, we describe the identifiability problem of RBF-periodic and 2-RBF mixed kernel Gaussian process regression models and our results regarding the problem. In Section \ref{sec:proof}, we present theoretical proofs about two models, respectively. The numerical cases of non-identifiable model in RBF-periodic, and 2-RBF mixed kernel Gaussian process regression models are given in Section \ref{sec:num_ex}. Concluding remarks are given in Section \ref{sec:disc}.

\section{Identifiability of Mixed kernel Gaussian Process Regression Model} \label{sec:pre}

\subsection{Notation}

Suppose we observe $n \in \mathbb{N}$ samples with $d$-dimensional covariate and 1-dimensional response ($x_1, y_1$), ..., ($x_n, y_n$), where $\mathbb{N}$ is the set of natural numbers. The observations follow Gaussian process regression (GPR) model:
\bea
    y_i = f(x_i) + \epsilon_i,\; \epsilon_i \stackrel{iid}{\sim} N(0, {\sigma_{0}}^2), i=1,\cdots,n,
\eea
where $\sigma_{0}^2>0$ is the observational variance, $f(x)$ follows Gaussian process with mean function $\mu(x)$ and covariance kernel $K(x, x')$, and we denote $f(x) \sim GP(\mu, K)$. Note that under the GPR model assumption
\bea
(f(x_1), \cdots, f(x_n))^T \sim N((\mu(x_1), \mu(x_2), ..., \mu(x_n))^T,\; K(x_1, x_2, \cdots, x_n)),
\eea
where $ K(x_1, \cdots, x_n) = [K(x_i, x_j)]$ represents an $ n \times n $ covariance matrix, called a kernel matrix, or Gram matrix.
In this paper, we consider two covariance kernel functions, the radial basis function kernel and the periodic kernel.

The RBF kernel is defined as
\bea
K_{RBF}(x, x'; \theta = (\sigma,l)) = \sigma^2 \times \exp(\frac{-||x-x'||^2}{l^2}), \; x, x' \in \mathbb{R}^d, \sigma, l>0.
\eea
We denote $\theta=(\sigma, l)$, where $\sigma$ is the parameter representing the impact of RBF kernel, and $l$ the smoothness of RBF kernel.  The periodic kernel is defined as
\bea
K_{period}(x, x'; \eta = (\tau, s)) = \tau^2 \times \exp(\frac{-2\sin^2({\pi(x-x')/p})}{s^2}), \quad x, x' \in \mathbb{R}, \tau, s, p>0,
\eea
where $\tau$ denotes the impact of the periodic kernel, $s$ the smoothness and $p$ the period of the model. Note that the periodic kernel is defined on $x \in \mathbb{R}$ while the RBF kernel is on $x \in \mathbb{R}^d$. If $f \sim GP(0, K_{period})$ with period $p$ and $|x-x'| = kp$ with an integer $k$, $corr(f(x), f(x')) = 1$. In this paper, we will consider the period $p$ as a known constant.

Consider a mixed kernel GPR model with two covariance kernel functions: 
\begin{equation}
    y = \mu(x) + f_1(x) + f_2(x) + \epsilon,
\end{equation}
where $\mu$ is a mean function, $f_1 \sim GP(0, K_1)$, $f_2 \sim GP(0, K_2)$ and $\epsilon \sim N(0, {\sigma_0}^2)$. In this case, $f_1+f_2 \sim GP(0, K_1+K_2)$. For the simplicity of discussion, we assume $\mu(x) = 0, \forall x$ for the rest of the paper.

\subsection{Identifiability of mixed kernel Gaussian process regression model}

The kernel matrices of the RBF kernel and the periodic kernel are written as
\bea
G_{RBF}=G_{RBF}(\theta) &=& [K_{RBF}(x_i, x_j;\theta = (\sigma, l))]\\
G_{period}=G_{period}(\eta) &=& [K_{period}(x_i, x_j;\eta = (\tau, s))]. 
\eea

Consider a RBF-periodic mixed kernel GPR model.
\begin{equation}\label{model:mixed}
y = f_1(x) + f_2(x) + \epsilon
\end{equation}
where
$$
f_1(x) \sim GP(0, G_{RBF}(\theta)),\quad f_2(x) \sim GP(0, G_{period}(\eta)),\quad \epsilon \sim N(0, \sigma_0^2).
$$
Due to the affine property of Gaussian distribution, 
\bea
f_1(x) + f_2(x) \sim GP(0, G_{RBF}(\theta)+G_{period}(\eta))
\eea
holds. If $\theta$ and $\eta$ can be uniquely determined from $G_{RBF}+G_{period}$, the model is identifiable.

\begin{theorem}[Identifiability of RBF-Periodic mixed kernel GPR model]\label{thm:RBF-Periodic}

Let
$X = \{ |x_i-x_j|$ : $1\le i, j \le n\}$. For the RBF-Periodic mixed GPR model with period $p$, if there exist $\alpha, \beta \in X$ such that $\displaystyle\frac{\alpha}{p} \in \mathbb{N}$ and $\displaystyle\frac{\beta}{p} \notin \mathbb{N}$, the RBF-periodic mixed kernel GPR model is identifiable.
\end{theorem}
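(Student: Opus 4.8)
The plan is to recast identifiability as injectivity of the map sending $(\theta,\eta)=((\sigma,l),(\tau,s))$ to the matrix $G_{RBF}(\theta)+G_{period}(\eta)$. Because both kernels are stationary, every entry of this matrix depends only on the distance $|x_i-x_j|\in X$, so the whole matrix is captured by the single \emph{profile function} $g(h)=\sigma^2\exp(-h^2/l^2)+\tau^2\exp(-2\sin^2(\pi h/p)/s^2)$ evaluated on $X$. Hence it suffices to show that if a second parameter vector $(\tilde\sigma,\tilde l,\tilde\tau,\tilde s)$ produces a profile $\tilde g$ with $\tilde g(h)=g(h)$ for every $h\in X$, then the two vectors coincide. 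I would record this family of scalar equations, one per distinct element of $X$, as the object to be analysed.

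The key structural observation is the dichotomy created by the hypothesis. Partition $X$ into $X_{\mathrm{in}}=\{h\in X:\sin(\pi h/p)=0\}$, i.e. the integer multiples of $p$ (including $0$), and its complement $X_{\mathrm{out}}$. On $X_{\mathrm{in}}$ the periodic kernel attains its maximum, so $g(h)=\sigma^2\exp(-h^2/l^2)+\tau^2$ with \emph{no} dependence on $s$; on $X_{\mathrm{out}}$ we have $\sin^2(\pi h/p)\in(0,1]$, so the periodic term is strictly below $\tau^2$ and genuinely carries information about $s$. The hypothesis guarantees $0,\alpha\in X_{\mathrm{in}}$ and $\beta\in X_{\mathrm{out}}$, which is exactly the leverage needed to peel the two kernels apart.

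I would then identify the parameters in stages. First, the diagonal ($h=0$) gives $\sigma^2+\tau^2$. Next, using the equations on $X_{\mathrm{in}}$ — where the problem reduces to a pure Gaussian RBF profile shifted by the constant $\tau^2$ — I would recover $(\sigma,l)$ and hence $\tau$, exploiting that $l\mapsto\exp(-h^2/l^2)$ is strictly increasing and that a scaled Gaussian is pinned down by its value at $0$ together with its decay. Finally, substituting the now-known $\sigma,l,\tau$ into the equation at $\beta$ leaves a single equation in $s$; since $s\mapsto\exp(-2\sin^2(\pi\beta/p)/s^2)$ is strictly monotone and $\sin^2(\pi\beta/p)>0$, this determines $s$ uniquely, completing the argument. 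Throughout I would keep track of the positivity constraints $\sigma,l,\tau,s>0$.

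The step I expect to be the main obstacle is the middle stage. A single multiple-of-$p$ distance $\alpha$ yields only the one equation $\sigma^2\exp(-\alpha^2/l^2)+\tau^2=\mathrm{const}$, which, combined with $\sigma^2+\tau^2$, gives $\sigma^2(1-\exp(-\alpha^2/l^2))=\mathrm{const}$ — a single relation between the amplitude $\sigma$ and the length-scale $l$, and hence a one-parameter family of candidate $(\sigma,l)$ rather than a unique pair. The crux is therefore to supply a second, independent constraint that decouples $\sigma$ from $l$: I would look to use \emph{several} distinct distances in $X_{\mathrm{in}}$ (so that the Gaussian's decay, not merely one value, is observed), or to appeal to the analytic rigidity of $h\mapsto\exp(-h^2/l^2)$. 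By contrast, the transcendental injectivity and monotonicity facts for the remaining steps should be routine once the correct set of distances is isolated, so verifying that the stated hypothesis genuinely furnishes enough independent equations to separate $\sigma$ and $l$ is where I expect the real work to lie.
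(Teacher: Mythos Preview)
Your diagnosis of the obstacle is exactly right, but neither of your proposed cures works. The hypothesis guarantees only \emph{one} nonzero multiple of $p$ in $X$; there is no reason for $X_{\mathrm{in}}$ to contain any further element (e.g.\ design points $\{0,\,p,\,p+\beta\}$ give $X_{\mathrm{in}}=\{0,p\}$ only), and ``analytic rigidity'' of $h\mapsto\exp(-h^2/l^2)$ is unavailable because you only evaluate it at finitely many $h$. With $X_{\mathrm{in}}=\{0,\alpha\}$ your middle stage yields the single relation $\sigma^2\bigl(1-e^{-\alpha^2/l^2}\bigr)=\tilde\sigma^2\bigl(1-e^{-\alpha^2/\tilde l^2}\bigr)$, leaving the one-parameter family you yourself describe; adjoining the equation at $\beta$ then reintroduces the two unknowns $s,\tilde s$, and the system remains underdetermined.

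The idea you are missing is that $X$ is not an arbitrary finite set: it is the full set of pairwise differences of points on the line, and this closure structure forces additional elements into $X$. The paper's key combinatorial step (Lemma~\ref{lemma:condition of x}) shows that under the hypothesis $X$ must contain a quadruple $\{0,\,mp,\,q,\,mp+q\}$ or $\{0,\,q,\,mp-q,\,mp\}$ with $q/p\notin\mathbb{N}$. The decisive gain is not a new element of $X_{\mathrm{in}}$ but the \emph{pair} $(q,\,mp+q)$ in $X_{\mathrm{out}}$: these two distances differ by an integer multiple of $p$, so the periodic kernel takes the same value at both, and the difference $g(mp+q)-g(q)$ is a second RBF-only constraint, independent of $g(mp)-g(0)$. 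Two such constraints decouple $\sigma$ from $l$ (the paper verifies their independence via a $2\times2$ determinant and a monotonicity argument, Lemmas~\ref{lemma:monotone function}--\ref{lemma: injectiive property}); after that your last step, recovering $s$ from an element of $X_{\mathrm{out}}$, goes through. The paper organises everything as a single rank-$4$ linear-independence argument on the vectors $v(x)$ rather than a sequential peel, but the substantive ingredient your plan lacks is the difference-set lemma that manufactures that fourth distance.
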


This theorem means that when 
\bea
G_{RBF}(\theta_1)+G_{period}(\eta_1) = G_{RBF}(\theta_2)+G_{period}(\eta_2)
\eea
holds,
\bea
\theta_1 = \theta_2,\quad \eta_1 = \eta_2
\eea
is satisfied.

Similar to RBF-periodic mixed kenrel GPR model, suppose 2-RBF mixed kernel GPR model using (\ref{model:mixed}) with
\bea
f_1(x) \sim GP(0, G_{RBF}(\theta_1)),\; f_2(x) \sim GP(0, G_{RBF}(\theta_2)),\; \epsilon \sim N(0, \sigma_0^2).
\eea
To distinguish two RBF kernel matrices in the model, let $l_1 < l_2$ for $\theta_1 = (\sigma_1, l_1),\;\theta_2 = (\sigma_2, l_2)$. 

\begin{theorem}[Identifiability of 2-RBF mixed model]\label{thm:2-RBF}
Let $X = \{\lVert x_i-x_j \rVert$ : $1\le i, j \le n\}$. If $|X| \ge 4$, the 2-RBF mixed kernel GPR model is identifiable, where $|X|$ is the cardinality of the set X.
\end{theorem}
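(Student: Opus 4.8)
The plan is to reduce the matrix equation to a statement about sums of two exponentials and then exploit the fact that exponential functions with distinct exponents form a Chebyshev system. Observe that the $(i,j)$ entry of $G_{RBF}(\theta_1)+G_{RBF}(\theta_2)$ depends on $x_i,x_j$ only through $r=\lVert x_i-x_j\rVert$, namely through
\[
g(r)=\sigma_1^2\exp\Big(\frac{-r^2}{l_1^2}\Big)+\sigma_2^2\exp\Big(\frac{-r^2}{l_2^2}\Big).
\]
Setting $u=r^2$ and $a_k=1/l_k^2$, this becomes $h(u)=\sigma_1^2 e^{-a_1 u}+\sigma_2^2 e^{-a_2 u}$, where $l_1<l_2$ gives $a_1>a_2>0$. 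Since $0\in X$ (take $i=j$) and $r\mapsto r^2$ is injective on $[0,\infty)$, the hypothesis $|X|\ge 4$ yields four distinct arguments $0=u_0<u_1<u_2<u_3$ at which the values of $h$ are determined by the observed Gram matrix. Thus two parameter sets $(\sigma_1,\sigma_2,l_1,l_2)$ and $(\sigma_1',\sigma_2',l_1',l_2')$ producing the same mixed kernel matrix must give exponential sums agreeing at $u_0,\dots,u_3$.

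Next I would invoke the classical fact that any nontrivial linear combination of $k$ exponentials $e^{\lambda_1 u},\dots,e^{\lambda_k u}$ with distinct exponents has at most $k-1$ real zeros (proved by induction using Rolle's theorem after factoring out one exponential). Apply this to the difference
\[
F(u)=\sigma_1^2 e^{-a_1 u}+\sigma_2^2 e^{-a_2 u}-(\sigma_1')^2 e^{-b_1 u}-(\sigma_2')^2 e^{-b_2 u},\qquad b_k=1/(l_k')^2,
\]
which vanishes at the four points $u_0,\dots,u_3$. As an exponential polynomial $F$ involves at most four distinct exponents, so a nonzero $F$ could have at most three zeros; having four forces $F$ to be the zero exponential polynomial, i.e. the coefficient attached to each distinct exponent must cancel.

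From the vanishing of $F$ I would read off the parameters by a short case analysis driven by positivity. If $a_1\notin\{b_1,b_2\}$, the exponent $a_1$ appears with coefficient $\sigma_1^2$ and no partner to cancel it, forcing $\sigma_1^2=0$, a contradiction; hence $a_1\in\{b_1,b_2\}$, and likewise $a_2\in\{b_1,b_2\}$. Since $a_1\ne a_2$, this gives $\{a_1,a_2\}=\{b_1,b_2\}$, and the orderings $a_1>a_2$, $b_1>b_2$ (equivalently $l_1<l_2$ and $l_1'<l_2'$) fix the pairing $a_1=b_1$, $a_2=b_2$, that is $l_1=l_1'$ and $l_2=l_2'$. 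Matching the remaining coefficients then yields $\sigma_1^2=(\sigma_1')^2$ and $\sigma_2^2=(\sigma_2')^2$, hence $\theta_1=\theta_1'$ and $\theta_2=\theta_2'$, which is identifiability.

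The main obstacle is the step that turns finitely many coincidences ($F=0$ at four points) into the structural conclusion $F\equiv 0$: this is exactly where the Chebyshev-system bound on the number of zeros is essential, and where the count $|X|\ge 4$ (four constraints for four unknown exponential parameters) is used sharply. The ordering convention $l_1<l_2$ is what removes the harmless label-swapping ambiguity, and the strict positivity of $\sigma_1^2,\sigma_2^2$ is what rules out the degenerate matchings in the case analysis; I would be careful to confirm that no spurious collision (such as $a_1=b_2$) survives once positivity and the orderings are imposed.
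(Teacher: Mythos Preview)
Your proposal is correct and follows the same underlying idea as the paper---bounding the number of zeros of an exponential sum via repeated Rolle/mean-value arguments---but packages it more efficiently. The paper first reformulates the equation as a rank condition on $\mathcal{C}(V)=\mathrm{span}\{v(x):x\in X\}$ with $v(x)=(\exp(-x^2/l_{11}^2),\dots,\exp(-x^2/l_{22}^2))^T$, and then proves $\mathrm{rank}\,\mathcal{C}(V)=4$ (resp.\ $=3$) by computing explicit determinants: it substitutes $\zeta_i=\exp(-a_1^2/l_{1i}^2)/\exp(-a_1^2/l_{11}^2)$ to turn the problem into showing that $h(t)=u_1(t-1)+u_2(t^a-1)+u_3(t^b-1)$ cannot have four roots on $\{t\ge 1\}$, and handles the ``only three distinct $l$'s'' situation by a separate $2\times 2$ determinant lemma. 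Your route bypasses the determinant scaffolding entirely: you work directly with the difference $F(u)$, invoke the general Chebyshev-system bound (``a nontrivial combination of $k$ exponentials has at most $k-1$ real zeros''), and obtain $F\equiv 0$ in one stroke regardless of how many of $a_1,a_2,b_1,b_2$ coincide. The paper's version has the merit of being self-contained (the Rolle reductions are spelled out case by case), while yours is shorter, handles the ``some exponents coincide'' cases uniformly, and makes transparent why exactly four sample values suffice.
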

This theorem means that for $X$ defined above, if $|X| \ge 4$, 
\bea
G_{RBF}(\theta_{11})+G_{RBF}(\theta_{12}) = G_{RBF}(\theta_{21})+G_{RBF}(\theta_{22})
\eea
and $l_{11}<l_{12}$ and  $l_{21}<l_{22}$,
\bea
\theta_{11} = \theta_{12},\quad \theta_{21} = \theta_{22}
\eea
is satisfied.
The proof of Theorem 1 and 2 are given in the next section.

%%%%%%%%%%%%%%%%%%%%%%%%%%%%%%%%%%%%%%%%%%%%%%%%%%%%%%%%%%%%%%%%%%%%%%%%%%%%%%%%%%%%%%%%%%%%%%%%%%%%%%%%%%%%%%

\section{Proofs}\label{sec:proof}
\subsection{Proof of Theorem \ref{thm:RBF-Periodic}}

Suppose $X$ satisfies condition mentioned in \ref{thm:RBF-Periodic}. Suppose that $G_{RBF}$ is kernel matrix of RBF kernel and $G_{period}$ of periodic kernel, with period $p$. In this model, let $\theta_i = (\sigma_i, l_i)$ and $\eta_i = (\tau_i, s_i)$\;$(i = 1, 2)$.

Suppose
\begin{equation}\label{eqn:RBF-Periodic mixed}
G_{RBF}(\theta_1) + G_{period}(\eta_1) = G_{RBF}(\theta_2) + G_{period}(\eta_2),\quad 
\forall \theta_1, \theta_2, \eta_1, \eta_2 
\end{equation}
which is equivalent to
\begin{equation}\label{eqn:RBF-Periodic mixed_element_ver}
    K_{RBF}(x_i, x_j;\theta_1) + K_{period}(x_i, x_j;\eta_1) = K_{RBF}(x_i, x_j;\theta_2)+ K_{period}(x_i, x_j;\eta_2),
\end{equation}
where $i, j=1,\cdots,n.$
Note that (\ref{eqn:RBF-Periodic mixed_element_ver}) is equivalent to
\begin{equation}\label{eqn:linear relation}
    (\sigma_1^2, \tau_1^2 \exp(-\frac{1}{s_{1}^2}), -\sigma_2^2, -\tau_2^2\exp(-\frac{1}{s_{2}^2}))^T \cdot v(x) = 0, \forall x \in X, 
\end{equation}
where
\bea
v(x) = (\exp(-\frac{x^2}{l_1^2}), \exp(\frac{\cos(2\pi x/p)}{s_1^2}),\exp(-\frac{x^2}{l_2^2}),\exp(\frac{\cos(2\pi x/p)}{s_2^2}))^T.
\eea
This is because for $x=|x_i-x_j|$, (\ref{eqn:RBF-Periodic mixed_element_ver}) is written as 
\bea
\sigma_1^2\exp(-\frac{x^2}{l_1^2})+\tau_1^2\exp(-\frac{cos(2\pi x/p)-1}{s_1^2}) = \sigma_2^2\exp(-\frac{x^2}{l_2^2})+\tau_2^2\exp(-\frac{cos(2\pi x/p)-1}{s_2^2}),
\eea
which is equivalent to (\ref{eqn:linear relation}).

Therefore, when we define $V = \{v(x)|x \in X\}$ and $\mathcal{C}(V)$ as a set of linear combinations of $V$, ker\;$\mathcal{C}(V)$ includes nonzero vector, and rank\;$\mathcal{C}(V) \;\le\; 3$. 

We first show that if $l_1 \neq l_2$ and $s_1 \neq s_2$, rank\;$\mathcal{C}(V)$\; = 4, and that if $l_1 = l_2$ and $s_1 \neq s_2$ or $l_1 \neq l_2$ and $s_1 = s_2$, a contradiction occurs. Finally, we finish the proof by showing that if $l_1 = l_2$ and $s_1 = s_2$, then $\sigma_1=\sigma_2$ and $\tau_1 = \tau_2$, which implies that the model is identifiable. For the proof, we need following lemmas.

\begin{lemma}\label{lemma:condition of x}
If there exists $\alpha, \beta \in X$ such that
\begin{equation}\label{eqn:condition of x}
\frac{\alpha}{p}\in \mathbb{N},\; 
\frac{\beta}{p} \notin \mathbb{N},
\end{equation}
then there exists $m \in \mathbb{N}$ and $q>0$ such that 
\begin{equation}\label{eqn:condition of lemma3.1}
\frac{q}{p}\notin \mathbb{N},\;  \{0, mp, q, mp+q\} \subset X \; or\; \{0, q, mp-q, mp\} \subset X.
\end{equation}
\end{lemma}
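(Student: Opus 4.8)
The plan is to translate the arithmetic condition on $X$ into a statement about three collinear data points and then read off the two target configurations by elementary case analysis. First I would record the two facts we may use for free: $0\in X$ always (take $i=j$), and the two configurations in (\ref{eqn:condition of lemma3.1}) are exactly the distance sets produced by three points on the line, one of whose gaps is a multiple of $p$ and another a non-multiple. Concretely, if three data points are ordered as $u<v<w$, then $\{0,\,v-u,\,w-v,\,w-u\}\subset X$, and assigning the multiple/non-multiple roles to the two adjacent gaps yields either $\{0,mp,q,mp+q\}$ or $\{0,q,mp-q,mp\}$. So it suffices to exhibit a suitable triple of data points.

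The key observation converting the hypothesis into such a triple is a residue argument modulo $p$. Writing $\alpha=mp$ with $m\in\mathbb{N}$, the two data points $x_a,x_b$ realizing $\alpha$ satisfy $x_a\equiv x_b \pmod p$; call their common residue $\rho$. Since $\beta/p\notin\mathbb{N}$, the two data points realizing $\beta$ have distinct residues modulo $p$, so at least one of them, say $x_c$, satisfies $x_c\not\equiv\rho\pmod p$. In particular $x_c\neq x_a,x_b$, so $\{x_a,x_b,x_c\}$ are three distinct points with $|x_a-x_b|=mp$ a positive multiple of $p$ and $|x_a-x_c|$ a positive non-multiple of $p$. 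I expect this step, manufacturing a single triple with one multiple gap and one non-multiple gap sharing a common vertex, to be the main obstacle, precisely because the pairs realizing $\alpha$ and $\beta$ need not overlap; the residue count is what guarantees a usable common vertex.

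With the triple in hand I would normalize by a translation and reflection (which preserve all pairwise distances, hence $X$) so that $x_a=0$, $x_b=mp$, and $x_c=t$ with $t/p\notin\mathbb{Z}$ and $t\neq 0,mp$, and then split on the position of $t$. If $t<0$, set $q=-t>0$; then $0,\ q,\ mp,\ mp+q$ all lie in $X$, giving the first configuration. If $0<t<mp$, set $q=t$; then $0,\ q,\ mp-q,\ mp\in X$, giving the second configuration. If $t>mp$, set $q=t-mp>0$; then $0,\ mp,\ q,\ mp+q\in X$, again the first configuration. In each branch $q>0$ and $q/p\notin\mathbb{N}$, since $t$ is not an integer multiple of $p$ while $mp$ is, so the displayed four-element set is exactly one of the two required subsets of $X$.

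The only remaining care is bookkeeping: checking that each computed distance genuinely lies in $X$ (immediate, as all three points are data points) and that the non-multiple property of $q$ survives adding or subtracting the multiple $mp$. These are routine, so once the triple is produced the conclusion follows directly.
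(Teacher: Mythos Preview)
Your argument is correct and genuinely different from the paper's. The paper sorts the data points as $x_1<\cdots<x_n$, sets $a_i=x_{i+1}-x_i$, and runs a four-way case analysis on the consecutive gaps: first on whether some $a_i/p\in\mathbb{N}$, and then on the location of the extremal index where the multiple/non-multiple property switches, extracting in each subcase three consecutive-or-nearly-consecutive points whose pairwise distances give the desired configuration. Your residue argument mod $p$ bypasses all of this ordering machinery: once you observe that the pair realizing $\alpha$ shares a residue $\rho$ while the pair realizing $\beta$ does not, pigeonhole hands you a single point $x_c\not\equiv\rho$, and the triple $\{x_a,x_b,x_c\}$ is produced in one stroke with no reference to the global order. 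The three-way split on the position of $t$ relative to $0$ and $mp$ is then the minimum unavoidable bookkeeping. What you gain is brevity and a clearer conceptual reason why the triple exists; what the paper's approach gains is nothing in particular here, though its gap-based viewpoint is perhaps more natural if one is already thinking of the design as a time series. One cosmetic remark: your phrase ``normalize by a translation and reflection'' is slightly heavier than needed---you never actually transform $X$, you only relabel which of $x_a,x_b$ sits on the left and compute the three pairwise distances directly---but the content is fine.
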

\begin{proof}
Without loss of generality, suppose that 
\bea 
X=\{x_1<x_2<...<x_n\}
\eea
and let 
\bea
a_i = x_{i+1} - x_{i}, \;i = 1,\cdots, n-1.
\eea

Case I. Suppose there exists $a_i$ such that $\displaystyle\frac{a_i}{p} \in \mathbb{N}$. Let 
\bea
t = \min\{i:1\le i \le n-1,\;\frac{a_i}{p} \in \mathbb{N}\}.
\eea

Suppose $t \ge 2$. By the definition of $t$,  $\displaystyle \frac{a_{t-1}}{p} \notin \mathbb{N}$.\;Let $q = a_{t-1}$ and $\displaystyle m = \frac{a_t}{p}$. Then
\bea
x_t-x_t = 0,\; x_t-x_{t-1} = q,\; x_{t+1} - x_t = a_t = mp, \text{ and } x_{t+1} - x_{t-1} = mp+q.
\eea
Thus, $\{0, mp, q, mp+q\} \subset X$.

Suppose $t = 1$. Let
\bea
s = \min\{i:1\le i \le n-1,\;\frac{a_i}{p} \notin \mathbb{N}\}
\eea
Note that the existence of s follows from (\ref{eqn:condition of x}). Since $t=1$, $s \ge 2$, and thus $\displaystyle \frac{a_{s-1}}{p} \in \mathbb{N}$ and $\displaystyle \frac{a_{s}}{p} \notin \mathbb{N}$. Define $q = a_s$, and $m = \displaystyle   \frac{a_{s-1}}{p}$.\; Then,
\bea
x_s-x_s = 0,\; x_{s+1} - x_s = a_s = q,\; x_s-x_{s-1}= mp,\; \text{and } x_{s+1} - x_{s-1} = mp+q.
\eea
Therefore, $\{0, mp, q, mp+q\} \subset V$.

Case II. Suppose that for all $1 \le i \le n$, $\displaystyle \frac{a_i}{p} \notin \mathbb{N}$.

Suppose there exists $(i, j)$ such that $\displaystyle \frac{x_j-x_i}{p}\in \mathbb{N}$ and $j \le n-1$.
For this $i, j$, define $\displaystyle m = \frac{x_j-x_i}{p}$ and set $q=a_j$. Then 
\bea
x_j-x_j=0,\; x_j-x_i=mp,\; x_{j+1}-x_j=q,\text{ and } x_{j+1}-x_i=mp+q,
\eea
which satisfies (\ref{eqn:condition of lemma3.1}).

Suppose there does not exist $(i, j)$ such that $\displaystyle \frac{x_j-x_i}{p} \in \mathbb{N}$ with $j \le n-1$. This implies that for all $(i, j)$ which satisfies $\displaystyle \frac{x_j-x_i}{p} \in \mathbb{N}, \; j=n$. Furthermore, we can see that there should be only one pair of (i, j). If it is not, for $i_1$ and $i_2$ such that $\displaystyle \frac{x_n-x_{i_1}}{p} \in \mathbb{N}$ and $\displaystyle \frac{x_n-x_{i_2}}{p} \in \mathbb{N},\; \displaystyle \frac{|x_{i_1}-x_{i_2}|}{p} \in \mathbb{N}$ holds, which contradicts the assumption. Therefore, since $\displaystyle\frac{a_{n-1}}{p} \notin \mathbb{N}$, $i$ which satisfies $\displaystyle\frac{x_n-x_i}{p} \in \mathbb{N}$ should satisfy $i \le n-2$. Define 
\bea
\displaystyle m = \frac{x_n-x_i}{p}, \text{ and } q = x_n-x_{n-1}.
\eea
Then
\bea
x_i-x_i=0,\; x_{n-1}-x_i = mp-q,\; x_n-x_{n-1} = q,\text{ and } x_n-x_i = mp,
\eea
which means that $\{0, mp-q, q, mp\} \subset X$, where $0<q<mp$.  
This concludes the proof of Lemma \ref{lemma:condition of x}.
\end{proof}

\begin{lemma}\label{lemma:2 exp vectors}
If $ k \neq l$ and $a\neq b$, two vectors $(e^{ka}, e^{kb})$ and $(e^{la}, e^{lb})$ are linearly independent.
\end{lemma}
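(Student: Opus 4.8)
The plan is to reduce linear independence of two vectors in $\mathbb{R}^2$ to the non-vanishing of a single $2\times 2$ determinant. Writing the two vectors as the rows of a matrix
\begin{equation*}
M = \begin{pmatrix} e^{ka} & e^{kb} \\ e^{la} & e^{lb} \end{pmatrix},
\end{equation*}
linear independence is equivalent to $\det M \neq 0$. So the first step is simply to compute this determinant explicitly.

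Next I would carry out the computation: $\det M = e^{ka}e^{lb} - e^{kb}e^{la} = e^{ka+lb} - e^{kb+la}$. Since the exponential function is injective, this difference vanishes if and only if the two exponents coincide, that is, $ka + lb = kb + la$. Rearranging gives $ka - la = kb - lb$, which factors neatly as $(k-l)a = (k-l)b$, i.e. $(k-l)(a-b) = 0$.

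Finally I would invoke the hypotheses. Because $k \neq l$ and $a \neq b$, both factors $k-l$ and $a-b$ are nonzero, so their product $(k-l)(a-b)$ is nonzero; hence $ka+lb \neq kb+la$ and therefore $\det M = e^{ka+lb} - e^{kb+la} \neq 0$. This establishes that the two vectors are linearly independent, completing the argument.

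There is really no serious obstacle here: the entire content is the one determinant evaluation together with the injectivity of $\exp$, and the only point requiring any care is the algebraic factorization $(k-l)(a-b)=0$, which is immediate. I expect this lemma to serve as a building block for the higher-rank independence arguments (e.g. establishing rank $\mathcal{C}(V) = 4$) used later in the proof of Theorem~\ref{thm:RBF-Periodic}.
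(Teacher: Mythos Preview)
Your argument is correct and matches the paper's proof almost exactly: both reduce to showing the $2\times 2$ determinant $e^{ka+lb}-e^{kb+la}$ is nonzero, the paper by factoring it as $e^{ka+lb}\bigl(1-e^{(k-l)(a-b)}\bigr)$ and you by invoking injectivity of $\exp$ on the exponents. These are the same computation, and your remark about the lemma's role in the rank-$4$ argument for Theorem~\ref{thm:RBF-Periodic} is also accurate.
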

\begin{proof}
The conclusion of the lemma is equivalent to 
$\begin{vmatrix}e^{ka} & e^{la} \\ e^{kb} & e^{lb}\end{vmatrix} \neq 0$ 
when
$ k \neq l$ and $a \neq b$. This is trivial because $\displaystyle\begin{vmatrix}e^{ka} & e^{la} \\ e^{kb} & e^{lb}\end{vmatrix} = e^{ka+lb} \times (1-e^{(k-l)(a-b)}).\; $
\end{proof}

\begin{lemma}\label{lemma:monotone function}
For $t, s>0$, $\displaystyle I(t, s) = \frac{te^{-ts}}{1-e^{-ts}}$ is monotonely decreasing in t.
\end{lemma}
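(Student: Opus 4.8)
The plan is to fix $s>0$, view $I(\cdot,s)$ as a single-variable function of $t>0$, and show directly that its derivative in $t$ is negative on all of $(0,\infty)$. Before differentiating I would first clean up the expression: multiplying the numerator and denominator of $\dfrac{t e^{-ts}}{1-e^{-ts}}$ by $e^{ts}$ yields the equivalent form
\bea
I(t,s) = \frac{t}{e^{ts}-1},
\eea
which removes the nested exponentials and is far more convenient to differentiate. This reformulation is the only mildly non-obvious move in the argument; everything after it is routine calculus.

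Next I would apply the quotient rule to $I(t,s)=t/(e^{ts}-1)$, treating $s$ as a constant. The denominator contributes $\partial_t(e^{ts}-1)=s e^{ts}$, so
\bea
\frac{\partial I}{\partial t} = \frac{(e^{ts}-1) - t\, s\, e^{ts}}{(e^{ts}-1)^2}.
\eea
Since the denominator $(e^{ts}-1)^2$ is strictly positive for $t,s>0$, the sign of $\partial_t I$ is governed entirely by the numerator $N = e^{ts} - 1 - ts\, e^{ts}$. To analyze this I would substitute $u = ts > 0$, reducing the task to showing that
\bea
h(u) := e^{u} - 1 - u\, e^{u} = e^{u}(1-u) - 1 < 0, \qquad u > 0.
\eea

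Finally I would dispatch this inequality by an auxiliary one-variable monotonicity argument: note $h(0)=0$ and $h'(u) = e^{u}(1-u) - e^{u} = -u\, e^{u}$, which is strictly negative for $u>0$. Hence $h$ is strictly decreasing on $(0,\infty)$ and starts at $0$, so $h(u)<0$ there; equivalently $N<0$, and therefore $\partial_t I < 0$ for all $t,s>0$, giving strict monotone decrease in $t$ as claimed. I do not expect any genuine obstacle here — the statement is an elementary calculus fact — so the ``hard part'' is merely bookkeeping: choosing the simplified form $t/(e^{ts}-1)$ at the outset and isolating the sign-determining numerator, after which the auxiliary function $h$ settles the sign in one differentiation.
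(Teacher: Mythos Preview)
Your proof is correct and follows essentially the same approach as the paper: differentiate in $t$, isolate the sign-determining numerator, and dispatch it via a one-variable auxiliary function whose derivative is transparently negative. The only cosmetic difference is that you first rewrite $I(t,s)=t/(e^{ts}-1)$ before differentiating, whereas the paper differentiates the original form; the resulting auxiliary functions $h(u)=e^{u}(1-u)-1$ and the paper's $r(x)=1-x-e^{-x}$ are related by $h(u)=e^{u}r(u)$, so the two arguments are the same up to that factor.
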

\begin{proof}
Note $\displaystyle \frac{\partial I}{\partial t} = e^{-ts}\times \frac{1-ts-e^{-ts}}{(1-e^{-ts})^2}$. When we set 
$r(x) = 1-x-e^{-x}$, $r'(x) = -1+e^{-x}$, which implies that
$r$ is monotonely decreasing on $\{x>0\}$. Therefore, for all $\displaystyle t, s>0, \frac{\partial I}{\partial t} = \frac{e^{-ts}}{(1-e^{-ts})^2} \times r(ts) < 0$, and $I(t, s)$ is monotonely decreasing in t.
\end{proof}

\begin{lemma}\label{lemma: injectiive property}
For $p, l>0$, let $\displaystyle h(x;p, l) = \frac{\exp(-(x+p)^2/l^2)-1}{\exp(-p^2/l^2)-1}$. If there exists $x>0$ such that $h(x;p, l_1) = h(x;p, l_2)$ for $l_1, l_2>0$, then $l_1 = l_2$.
\end{lemma}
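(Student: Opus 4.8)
The plan is to show that, for fixed $x>0$ and $p>0$, the map $l \mapsto h(x;p,l)$ is strictly monotone, hence injective; the stated implication $h(x;p,l_1)=h(x;p,l_2)\Rightarrow l_1=l_2$ then follows immediately. The natural first move is to reparametrize. Writing $u = 1/l^2$, which is a strictly decreasing bijection of $(0,\infty)$ onto itself, and clearing the common sign in the numerator and denominator, I would recast $h$ as
\[
h = \frac{1-\exp(-bu)}{1-\exp(-au)}, \qquad a = p^2,\quad b=(x+p)^2 .
\]
Since $x>0$ we have $b>a>0$, so it suffices to prove that this quantity is a strictly monotone function of $u$ on $(0,\infty)$; injectivity in $u$ transfers back to injectivity in $l$ because $l\mapsto 1/l^2$ is one-to-one on $(0,\infty)$.

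To establish strict monotonicity I would pass to the logarithmic derivative, which conveniently splits the ratio into two comparable pieces. Differentiating $\log h$ in $u$ gives
\[
\frac{d}{du}\log h = \frac{b\,e^{-bu}}{1-e^{-bu}} - \frac{a\,e^{-au}}{1-e^{-au}} = I(b,u) - I(a,u),
\]
where $I$ is exactly the function $I(t,s)=t e^{-ts}/(1-e^{-ts})$ of Lemma \ref{lemma:monotone function}. This is the crux of the argument, and it is precisely where the previously established monotonicity enters: by Lemma \ref{lemma:monotone function}, $I(\cdot,u)$ is strictly decreasing in its first argument for each fixed $u>0$, so $b>a$ forces $I(b,u)<I(a,u)$, whence $\tfrac{d}{du}\log h<0$ throughout $(0,\infty)$.

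Consequently $h$ is strictly decreasing in $u$, therefore injective; combined with the bijectivity of $l\mapsto 1/l^2$, the hypothesis $h(x;p,l_1)=h(x;p,l_2)$ yields $1/l_1^2=1/l_2^2$ and finally $l_1=l_2$. I do not expect a serious obstacle once the reparametrization is in place. The only points requiring care are the sign bookkeeping when rewriting $h$ (both the numerator and the denominator of the original expression are negative, so the overall sign cancels) and the verification that $b>a$, which is exactly where the assumption $x>0$ is used. Should a self-contained route be preferred to invoking Lemma \ref{lemma:monotone function}, the same conclusion can be reached by showing directly that the numerator $b e^{-bu}(1-e^{-au}) - a e^{-au}(1-e^{-bu})$ of $h'(u)$ is negative: after multiplying by $e^{au}$ it becomes a function vanishing at $u=0$ whose derivative is $b(b-a)\bigl(e^{-bu}-e^{-(b-a)u}\bigr)<0$ for $u>0$, forcing the function itself to stay negative on $(0,\infty)$.
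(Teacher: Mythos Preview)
Your argument is correct and follows essentially the same route as the paper: reparametrize by $u=1/l^2$ (the paper uses $y=1/l$), reduce to the sign of $I((x+p)^2,u)-I(p^2,u)$, and invoke Lemma~\ref{lemma:monotone function}. Your use of the logarithmic derivative is a minor streamlining of the quotient-rule computation in the paper, but the key step and the dependence on Lemma~\ref{lemma:monotone function} are identical.
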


\begin{proof}
For $x, y, p>0$, define 
\bea
\displaystyle s(x, y;p) = \frac{\exp(-(x+p)^2y^2)-1}{\exp(-p^2y^2)-1}.
\eea
Then it is equivalent to show that if there exists $x>0$ such that $s(x, 1/l_1; p) = s(x, 1/l_2; p)$, $l_1 = l_2$.
\bea
    \displaystyle
\frac{\partial s}{\partial y} = \frac{2y(1-\exp(-p^2y^2))(1-\exp(-(x+p)^2y^2))}{(\exp(-p^2y^2)-1)^2} \times \\
(\frac{(x+p)^2\exp(-(x+p)^2y^2)}{1-\exp(-(x+p)^2y^2)} - \frac{p^2\exp(-p^2y^2)}{1-\exp(-p^2y^2)}).
\eea
When we define 
\bea
\displaystyle
g(x, y;p) &=& \frac{2y(1-\exp(-p^2y^2))(1-\exp(-(x+p)^2y^2))}{(\exp(-p^2y^2)-1)^2},\\
\eea
the following holds:
\bea
\displaystyle
\frac{\partial s}{\partial y} &=& g(x, y;p)(I((x+p)^2, y^2) - I(p^2, y^2)).
\eea
When $x, y, p>0, g(x, y;p) >0$ and $(x+p)^2>p^2>0$ holds. By Lemma \ref{lemma:monotone function}, $\displaystyle \frac{\partial s}{\partial y} < 0$. Therefore, if $\displaystyle s(x, \frac{1}{l_1};p) = s(x, \frac{1}{l_2};p)$, $l_1 = l_2$ holds. This concludes the proof.
\end{proof}

\begin{lemma}\label{lemma: mainstream}
Let
\bea
v(x) = {(\exp(-\frac{x^2}{l_1^2}), 
\exp(-\frac{x^2}{l_2^2}),
\exp(\frac{\cos(2\pi x/p)}{s_1^2}),
\exp(\frac{\cos(2\pi x/p)}{s_2^2}))}^T
\eea
and $V = \{v(x)|x\in X\}$. If $l_1 \neq l_2, s_1 \neq s_2,$ and
\bea
\{0, mp, q, mp+q\}\;or\;\{0, q, mp-q, mp\} \subset X,\,
\eea
then rank $\mathcal{C}(V) = 4$. 
\end{lemma}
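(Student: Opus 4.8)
The plan is to prove $\mathrm{rank}\,\mathcal{C}(V)=4$ by exhibiting four linearly independent members of $V$, namely $v(x)$ evaluated at the four abscissae supplied by the hypothesis (which Lemma \ref{lemma:condition of x} guarantees lie in $X$). I treat the case $\{0, mp, q, mp+q\}\subset X$; the case $\{0,q,mp-q,mp\}\subset X$ is handled identically after relabeling. I assemble the $4\times 4$ matrix $M$ whose rows are $v(0), v(mp), v(q), v(mp+q)$, and it then suffices to show $\det M\neq 0$.

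First I would record the simplification forced by the period. Since $\cos(2\pi(kp)/p)=1$ and $\cos(2\pi(kp\pm q)/p)=\cos(2\pi q/p)=:c$, the four abscissae split into two groups that share their last two (periodic) coordinates: $v(0)$ and $v(mp)$ both carry periodic part $(e^{1/s_1^2},e^{1/s_2^2})$, while $v(q)$ and $v(mp+q)$ both carry periodic part $(e^{c/s_1^2},e^{c/s_2^2})$; moreover $c\neq 1$, because $q/p\notin\mathbb{N}$ together with $q>0$ forces $q/p\notin\mathbb{Z}$. Replacing the second row by $v(mp)-v(0)$ and the fourth by $v(mp+q)-v(q)$ annihilates the periodic coordinates in those two rows, so $M$ becomes block triangular and $\det M=\pm\det U\cdot\det W$, where
\[
W=\begin{pmatrix} e^{1/s_1^2} & e^{1/s_2^2}\\ e^{c/s_1^2} & e^{c/s_2^2}\end{pmatrix},\qquad
U=\begin{pmatrix} e^{-(mp)^2/l_1^2}-1 & e^{-(mp)^2/l_2^2}-1\\ e^{-(mp+q)^2/l_1^2}-e^{-q^2/l_1^2} & e^{-(mp+q)^2/l_2^2}-e^{-q^2/l_2^2}\end{pmatrix}.
\]
The factor $\det W\neq 0$ is immediate from Lemma \ref{lemma:2 exp vectors}: its columns are $(e^{k\cdot 1},e^{k\cdot c})$ and $(e^{l\cdot 1},e^{l\cdot c})$ with $k=1/s_1^2\neq 1/s_2^2=l$ (since $s_1\neq s_2$) and $1\neq c$, hence linearly independent.

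The remaining factor $\det U\neq 0$ is the crux. Writing $u_i=1/l_i^2$ and $a=(mp+q)^2,\ b=q^2,\ d=(mp)^2$, I set $g(u)=e^{-du}-1$ and $k(u)=e^{-au}-e^{-bu}$, so that $\det U=g(u_1)k(u_2)-g(u_2)k(u_1)=g(u_1)g(u_2)\bigl(\psi(u_2)-\psi(u_1)\bigr)$ with $\psi=k/g$. Since $l_1\neq l_2$ gives $u_1\neq u_2$ and $g(u_i)\neq 0$, it suffices to show $\psi$ is injective on $(0,\infty)$, which I would get from strict monotonicity. Computing the logarithmic derivative yields $\psi'/\psi=(d-a)+J(a-b)-J(d)$ with $J(t)=t/(1-e^{-tu})$; one checks $0<J'<1$ (equivalently $1-x<e^{-x}$ for $x\neq 0$, the same estimate $r(x)=1-x-e^{-x}<0$ that drives Lemma \ref{lemma:monotone function}), and since $a-b=d+2mpq>d$ this gives $J(a-b)-J(d)<(a-b)-d<a-d$, whence $\psi'<0$. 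Thus $\psi$ is strictly decreasing, $\det U\neq 0$, and therefore $\det M\neq 0$.

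I expect this monotonicity of $\psi$ to be the main obstacle. Unlike the periodic block, $U$ is \emph{not} a ratio of single exponentials, so Lemma \ref{lemma:2 exp vectors} does not apply; and because its second row carries a genuinely two-term numerator $e^{-au}-e^{-bu}$ (rather than a difference against $1$), it is not literally of the form handled by Lemma \ref{lemma: injectiive property} either, so it cannot simply be reduced to that injectivity result. It needs the dedicated convexity estimate above, and the sign bookkeeping must be adjusted in the case $\{0,q,mp-q,mp\}$, where instead $a=(mp-q)^2<d$ with $0<q<mp$, so that $a-b$ may be negative and one must track the sign of $\psi'/\psi$ through both regimes $mp\gtrless 2q$; in each regime the endpoint behaviour of $\psi$ together with $0<J'<1$ again yields injectivity. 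Once $\psi$ is injective in all configurations, $\det M\neq 0$ and $\mathrm{rank}\,\mathcal{C}(V)=4$, completing the lemma.
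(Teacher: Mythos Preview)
Your overall architecture matches the paper's exactly: subtract $v(mp)-v(0)$ and $v(mp\pm q)-v(q)$ to zero out the periodic coordinates, factor $\det M=\pm\det U\cdot\det W$, dispatch $\det W\neq 0$ with Lemma~\ref{lemma:2 exp vectors}, and then argue $\det U\neq 0$ via injectivity (in $l$) of the ratio of the two RBF rows. For the configuration $\{0,mp,q,mp+q\}$ your direct computation is correct and in fact tighter than the paper's bare citation of Lemma~\ref{lemma: injectiive property}: with $a=(mp+q)^2$, $b=q^2$, $d=(mp)^2$ one has $a-b=d+2mpq>d$, so $0<J'<1$ gives $J(a-b)-J(d)<(a-b)-d<a-d$, hence $\psi'/\psi<0$ and $\psi$ is strictly decreasing on $(0,\infty)$.

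The gap is the second configuration $\{0,q,mp-q,mp\}$, which you defer to ``sign bookkeeping'' and ``endpoint behaviour.'' Here $a=(mp-q)^2<d$, and (taking $mp>2q$ so that $a>b$) one has $0<a-b<d$; now $d-a>0$ while $J(a-b)-J(d)<0$, and the estimate $0<J'<1$ only confines $\psi'/\psi$ to the interval $(-b,\,d-a)$, which contains zero. Worse, $\psi$ is genuinely \emph{not injective} in this regime: with $mp=3$, $q=1$ (so $d=9$, $a=4$, $b=1$) one finds $\psi(0^+)=\tfrac13$, $\psi(0.5)\approx 0.48$, $\psi(2)\approx 0.14$, so $\psi$ first rises and then falls, and for instance $\psi(u_1)=\psi(u_2)$ at $u_1\approx 0.11$, $u_2\approx 0.82$. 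Thus no amount of bookkeeping on $\psi'$ will establish $\det U\neq 0$ along this route, and your claim that this case ``is handled identically after relabeling'' is precisely where the argument breaks. The paper does not attempt monotonicity of $\psi$ for this configuration; it instead invokes the algebraic identity $1/h(-x;p,l)=h(x;p-x,l)$ to recast $\det B=0$ as an instance of Lemma~\ref{lemma: injectiive property} with parameters $(q,mp-q)$ in place of $(q,mp)$, thereby reducing the second case back to the first rather than analysing $\psi$ directly.
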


\begin{proof}
First, suppose that $(0, mp, q, mp+q) \in X$. Then
\bea
\displaystyle
v(0) = 
\left(
\begin{matrix}
1\\
1\\
\displaystyle\exp(\frac{1}{s_1^2})\\
\displaystyle\exp(\frac{1}{s_2^2})
\end{matrix}
\right)&,& 
v(mp) = 
\left(
\begin{matrix}
\displaystyle\exp(-\frac{m^2p^2}{l_1^2})\\
\displaystyle\exp(-\frac{-m^2p^2}{l_2^2})\\
\displaystyle\exp(\frac{1}{s_1^2})\\
\displaystyle\exp(\frac{1}{s_2^2})
\end{matrix}
\right), \\
v(q) = 
\left(
\begin{matrix} 
\displaystyle\exp(-\frac{q^2}{l_1^2})\\
\displaystyle\exp(-\frac{q^2}{l_2^2})\\
\displaystyle\exp(\frac{\cos(2\pi q/p)}{s_1^2})\\
\displaystyle\exp(\frac{\cos(2\pi q/p)}{s_2^2})
\end{matrix} 
\right)&,&
v(mp+q) = 
\left(
\begin{matrix}
\displaystyle\exp(-\frac{(mp+q)^2}{l_1^2})\\
\displaystyle\exp(-\frac{-(mp+q)^2}{l_2^2})\\
\displaystyle\exp(\frac{\cos(2\pi q/p)}{s_1^2})\\
\displaystyle\exp(\frac{\cos(2\pi q/p)}{s_2^2})
\end{matrix} 
\right). 
\eea
Define
\bea
w_1 = v(mp)&-&v(0)=\left(\begin{matrix} 
\displaystyle\exp(-\frac{m^2p^2}{l_1^2})-1 \\ 
\displaystyle\exp(-\frac{m^2p^2}{l_2^2})-1\\
0\\0
\end{matrix} \right)\\ 
w_2 = v(mp+q)&-&v(q)=\left(\begin{matrix} \displaystyle\exp(-\frac{(mp+q)^2}{l_1^2})-\exp(-\frac{q^2}{l_1^2}) \\
\displaystyle\exp(-\frac{(mp+q)^2}{l_2^2})-\exp(-\frac{q^2}{l_2^2})\\
0\\0
\end{matrix} \right).
\eea
Since 
\bea
\mathcal{C}(\{v(0), v(mp), v(q), v(mp+q)\}) = 
\mathcal{C}(\{v(0), v(q), w_1, w_2\}),
\eea
it suffices to show that rank $\mathcal{C}(\{v(0), v(q), w_1, w_2\}) = 4$ or equivalently that $\{v(0), v(q), w_1, w_2\}$ is a linearly independent set. Suppose that for $a, b, c, d \in \mathbb{R}$,  $av(0) + bv(q) + cw_1 + dw_2 = 0$. 

By comparing the $3^{rd}$, $4^{th}$ components, we can see that
\bea
a
\left(
\begin{matrix}
\displaystyle\exp(\frac{1}{s_1^2})\\ 
\displaystyle\exp(\frac{1}{s_1^2})
\end{matrix}
\right)
+
b
\left(
\begin{matrix}
\displaystyle\exp(\frac{\cos(2\pi q/p)}{s_1^2})\\ 
\displaystyle\exp(\frac{\cos(2\pi q/p)}{s_2^2})
\end{matrix}
\right) 
=
\left(
\begin{matrix}
0\\0 
\end{matrix}
\right)
\eea
should be satisfied. By Lemma \ref{lemma:2 exp vectors}, the two 2-dimensional vectors are linearly independent, which implies that $a$ and $b$ are 0.

By comparing the $1^{st}$ and $2^{nd}$ components, we have
\bea
c
\left(
\begin{matrix}
     \displaystyle\exp(-\frac{m^2p^2}{l_1^2})-1\\
     \displaystyle\exp(-\frac{m^2p^2}{l_2^2})-1
\end{matrix} 
\right) 
+ 
d
\left(
\begin{matrix}
     \displaystyle\exp(-\frac{(mp+q)^2}{l_1^2})-\exp(-\frac{q^2}{l_1^2})\\
     \displaystyle\exp(-\frac{(mp+q)^2}{l_2^2})-\exp(-\frac{q^2}{l_2^2})
\end{matrix} 
\right) 
=
\left(
\begin{matrix}
     0\\
     0
\end{matrix}
\right).
\eea
This is equivalent to

\bea
B
\left(
\begin{matrix}
     c\\d
\end{matrix}
\right) 
= 
\left( 
\begin{matrix}
    0\\0
\end{matrix}
\right),
\eea
where
\bea
B = \left(
\begin{matrix}
\displaystyle\exp(-\frac{m^2p^2}{l_1^2})-1, & \displaystyle\exp(-\frac{(mp+q)^2}{l_1^2})-\exp(-\frac{q^2}{l_1^2})\\
\displaystyle\exp(-\frac{m^2p^2}{l_2^2})-1, & \displaystyle\exp(-\frac{(mp+q)^2}{l_2^2})-\exp(-\frac{q^2}{l_2^2})
\end{matrix}
\right).
\eea
Suppose that $(c, d)\neq(0, 0)$, which means $\det B = 0$. When we define 
\bea
\displaystyle g(x;p, l) = \exp(-\frac{(x+p)^2}{l^2}) - \exp(\frac{x^2}{l^2}),
\eea
\bea
\det B = g(0;mp, l_1)g(q;mp, l_2) - g(0;mp, l_2)g(q, mp, l_1)
\eea
holds. When $x,\;p>0,\;g(x;p, l) \neq 0$ holds. Therefore, 
$\det B = 0$ is equivalent to 
\bea
\displaystyle\frac{g(q;mp, l_1)}{g(0;mp, l_1)}= \frac{g(q;mp, l_2)}{g(0;mp, l_2)}.
\eea
Define 
\bea
h(x;p, l) = \frac{\exp(-(x+p)^2/l^2) - \exp(-x^2/l^2)}{\exp(-p^2/l^2) - 1}.
\eea
Note
\bea
\displaystyle h(q;mp, l) = \frac{g(q;mp, l)}{g(0;mp, l)}.
\eea
Consequently, $\det B = 0$ is equivalent to 
\bea
h(q;mp, l_1) = h(q;mp, l_2).
\eea
By Lemma \ref{lemma: injectiive property}, it is equivalent to $l_1 = l_2$, which contradicts to the fact that $l_1 \neq l_2$. Therefore, the assumption that $\det B \neq 0$ cannot be satisfied, which implies that $(c, d)$ should be $(0, 0)$. In conclusion, if
\bea
av(0) + bv(q) + cw_1 + dw_2 = 0
\eea
holds,
\bea
(a, b, c, d) = (0, 0, 0, 0).
\eea
Accordingly, $\{v(0), v(q), w_1, w_2\}$ is a set of linearly independent vectors, which implies rank $\mathcal{C}(V)=4$.

Now, suppose that $(0, q, mp-q, mp) \in X$. In this case, by the definition of X, $0<q<mp$. We can prove the result of the lemma similarly with
\bea
B = 
\left( 
\begin{matrix}
\displaystyle\exp(-\frac{m^2p^2}{l_1^2})-1, & \displaystyle\exp(-\frac{(mp-q)^2}{l_1^2})-\exp(-\frac{q^2}{l_1^2})\\
\displaystyle\exp(-\frac{m^2p^2}{l_2^2})-1, & \displaystyle\exp(-\frac{(mp-q)^2}{l_2^2})-\exp(-\frac{q^2}{l_2^2})
\end{matrix}
\right).
\eea
Therefore, $\det B = 0$ is equivalent to
\bea
h(-q;mp, l_1) = h(-q;mp, l_2).
\eea
By the definition of $h$,
\bea
\displaystyle\frac{1}{h(-x;p, l)}= h(x;p-x,l) 
\eea
holds. Consequently,
\bea
h(-q;mp,l_1) = h(-q;mp,l_2)
\eea
is equivalent to
\begin{equation}\label{eqn:h is injective}
h(q;mp-q, l_1) = h(q;mp-q, l_2). 
\end{equation}
Since $0<q<mp$, according to Lemma \ref{lemma: injectiive property}, $l_1 = l_2$ holds. This contradicts the assumption of Lemma \ref{lemma: mainstream}. Therefore, $\{v(0), v(q), v(mp-q), v(mp)\}$ is a linearly independent set, which means that rank $\mathcal{C}(\{v(0), v(q), v(mp-q), v(mp)\}) = 4$.
\end{proof}

Therefore, if $G_{RBF}(\theta_1) + G_{period}(\eta_1) = G_{RBF}(\theta_2) + G_{period}(\eta_2)$ holds, $l_1 \neq l_2$ and $s_1 \neq s_2$ cannot be satisfied simultaneously. Now we consider two more cases to conclude the proof.

Case I. Suppose $l_1=l_2$ and $s_1 \neq s_2$. This implies that
\begin{equation}\label{eqn: relation in case 1}
\displaystyle(\sigma_1^2 - \sigma_2^2)\exp(-\frac{x^2}{l^2}) + \tau_1^2e^{-1/s_1^2}\exp(\frac{\cos(2\pi x/p)}{s_1^2}) = \tau_2^2e^{-1/s_2^2}\exp(\frac{\cos(2\pi x/p)}{s_2^2}).
\end{equation}
Define 
\bea
w(x) = (\exp(-\frac{x^2}{l_1^2}), \exp(\frac{\cos(2\pi x/p)}{s_1^2}),\exp(\frac{\cos(2\pi x/p)}{s_2^2}))^T, \; W = \{w(x)|x \in X\}.
\eea 
To satisfy (\ref{eqn: relation in case 1}), rank $\mathcal{C}(W) \le 2$ should be satisfied. However, since
\bea
\mathcal{C}(\{w(0), w(mp)\})=\mathcal{C}(\{(1, 0, 0)^T, (0, \exp(\frac{1}{s_1^2}), \exp(\frac{1}{s_2^2}))^T\})
\eea
holds and by Lemma \ref{lemma:2 exp vectors},
\bea
w(q) = (\exp(-\frac{q^2}{l_1^2}),\exp(\frac{\cos(2\pi q/p)}{s_1^2}) , \exp(\frac{\cos(2\pi q/p)}{s_2^2}))^T
\eea
cannot be expressed as a linear combination of $(1, 0, 0)^T$ and $(0, \exp(1/s_1^2), \exp(1/s_2^2))^T$. Therefore, rank $\mathcal{C}(W) = 3$ and $\tau_1=\tau_2=0$ holds, which contradicts the initial assumption.

Case II. Suppose $l_1 \neq l_2$ and $s_1 = s_2$. Then
\begin{equation}\label{eqn:relation in case 2}
    \sigma_1^2\exp(-\frac{x^2}{l_1^2}) + (\tau_1^2-\tau_2^2)e^{-1/s_1^2}\exp(\frac{\cos(2\pi x/p)}{s_1^2}) = \sigma_2^2\exp(-\frac{x^2}{l_2^2})
\end{equation}
holds. Define
\bea
u(x) = (\exp(-\frac{x^2}{l_1^2}), \exp(-\frac{x^2}{l_2^2}),\exp(\frac{\cos(2\pi x/p)}{s_1^2}))^T, \; U = \{u(x)|x \in X\}.
\eea 
To satisfy (\ref{eqn:relation in case 2}) with $\sigma_1\sigma_2 \neq 0$, rank $\mathcal{C}(U) \le 2$ should be satisfied. Since
\bea
\displaystyle
u(0) = (1, 1, \exp(\frac{1}{s_1^2}))^T, u(mp) = (\exp(-\frac{m^2p^2}{l_1^2}), \exp(-\frac{m^2p^2}{l_2^2}), \exp(\frac{1}{s_1^2}))^T
\eea
are linearly independent vectors by Lemma \ref{lemma:2 exp vectors}, $\mathcal{C}(U) \;\subset\; \mathcal{C}(\{u(0), u(mp)\})$ should be satisfied to make rank $\mathcal{C}(U) \le 2$. To satisfy this, $u(mp+q) - u(q)$ or $u(mp-q)-u(q)$ should be included in $ \mathcal{C}(\{u(0), u(mp)\})$. For $u(mp+q) - u(q)$, it is equivalent to
\begin{equation}\label{eqn:rank3prove}
    u(mp+q) - u(q) = cu(0) + du(mp)
\end{equation}
for some $c$ and $d$. By comparing $3^{rd}$ element of (\ref{eqn:rank3prove}), $c+d = 0$ should hold. Then (\ref{eqn:rank3prove}) is equivalent to
\bea
\left(
\begin{matrix}
\displaystyle\exp(-\frac{(mp+q)^2}{l_1^2}) - \exp(-\frac{q^2}{l_1^2}) \\ 
\displaystyle\exp(-\frac{(mp+q)^2}{l_2^2}) - \exp(-\frac{q^2}{l_2^2})
\end{matrix}
\right) = d 
\left(
\begin{matrix}
\displaystyle\exp(-\frac{m^2p^2}{l_1^2})-1\\ 
\displaystyle\exp(-\frac{m^2p^2}{l_2^2})-1
\end{matrix}
\right)
\eea
In the proof of Lemma \ref{lemma: mainstream}, we proved that to satisfy
\bea
\det
\left(
\begin{matrix}
\displaystyle\exp(-\frac{m^2p^2}{l_1^2})-1, & \displaystyle\exp(-\frac{(mp+q)^2}{l_1^2})-\exp(-\frac{q^2}{l_1^2})\\
\displaystyle\exp(-\frac{m^2p^2}{l_2^2})-1, & \displaystyle\exp(-\frac{(mp+q)^2}{l_2^2})-\exp(-\frac{q^2}{l_2^2})
\end{matrix}
\right)
=0,
\eea
$l_1 = l_2$ should hold, which contradicts the assumption.
For $u(mp-q)-u(q)$, the proof goes exactly the same way.
Therefore, to satisfy 
\bea
G_{RBF}(\theta_1)+G_{period}(\eta_1) = G_{RBF}(\theta_2) + G_{period}(\eta_2),
\eea
$l_1 = l_2, s_1 = s_2$ should be held. When this holds, to satisfy
\bea
K_{RBF}(x_i, x_j;\theta_1)+K_{period}(x_i, x_j;\eta_1) = K_{RBF}(x_i, x_j;\theta_2) + K_{period}(x_i,x_j;\eta_2),
\eea
\begin{equation}\label{eqn: i=j}
    \sigma_1^2-\sigma_2^2 = \tau_2^2-\tau_1^2
\end{equation}
\begin{equation}\label{eqn: i>j}
(\sigma_1^2-\sigma_2^2)\exp(-\frac{(x_i-x_j)^2}{l_1^2}) = 
(\tau_2^2-\tau_1^2)\exp(-\frac{2\sin^2(\pi(x_i-x_j)/p)}{s_1^2})
\end{equation}
should hold. (\ref{eqn: i=j}) is obtained by setting $i=j$, and (\ref{eqn: i>j}) by $i>j$. Applying (\ref{eqn: i>j}) for $x_i, x_j$ which satisfy $x_i-x_j = mp$, $\sigma_1 = \sigma_2$ and $\tau_1 = \tau_2$ are obtained.

To sum up, if $G_{RBF}(\theta_1)+G_{period}(\eta_1) = G_{RBF}(\theta_2) + G_{period}(\eta_2)$ holds, $\theta_1= \theta_2$ and $\eta_1 = \eta_2$ should be satisfied, which completes the proof of Theorem \ref{thm:RBF-Periodic}. 

\subsection{Proof of Theorem \ref{thm:2-RBF}}

Since here, we consider observation points $\{x_i:\;1\le i \le n\}$ as $d$-dimensional vectors, and suppose following condition holds: 
\begin{equation}\label{eqn:2-RBF condition}
    X = \{\lVert x_i-x_j \rVert : 1\le i, j \le n\},\;|X| \ge 4
\end{equation}
Suppose that $G_{RBF}(\theta_1)$ is the kernel matrix of short-term RBF kernel and $G_{RBF}(\theta_2)$ long-term RBF kernel. In this model, denote $\theta_{ij} = (\sigma_{ij}, l_{ij})\;(i, j = 1, 2)$, where $j=1, 2$ represent parameters of short-term RBF kernel matrix, and long-term RBF kernel matrix each.

According to this notation, $l_{11}<l_{12},\; l_{21}<l_{22}$.
\begin{equation}\label{eqn:GGGG}
    G_{RBF}(\theta_{11}) + G_{RBF}(\theta_{12}) = G_{RBF}(\theta_{21}) + G_{RBF}(\theta_{22}) 
\end{equation}
means that for all $x_i, x_j$,
\begin{equation}\label{eqn:2-RBF equation}
    \sigma_{11}^2\exp(-\frac{(x_i-x_j)^2}{l_{11}^2}) +
    \sigma_{12}^2\exp(-\frac{(x_i-x_j)^2}{l_{12}^2}) =
    \sigma_{21}^2\exp(-\frac{(x_i-x_j)^2}{l_{21}^2}) +
    \sigma_{22}^2\exp(-\frac{(x_i-x_j)^2}{l_{22}^2}).
\end{equation}
It can be represented in vector form:
\bea
(\sigma_{11}^2, \sigma_{12}^2, -\sigma_{21}^2, -\sigma_{22}^2)
\left(
\begin{matrix}
\displaystyle\exp(-\frac{(x_i-x_j)^2}{l_{11}^2})\\
\displaystyle\exp(-\frac{(x_i-x_j)^2}{l_{12}^2})\\
\displaystyle\exp(-\frac{(x_i-x_j)^2}{l_{21}^2})\\
\displaystyle\exp(-\frac{(x_i-x_j)^2}{l_{22}^2})
\end{matrix}
\right)
=0.
\eea
Therefore, for
\bea
v(x) = 
\left(
\begin{matrix}
\displaystyle\exp(-\frac{x^2}{l_{11}^2})\\
\displaystyle\exp(-\frac{x^2}{l_{12}^2})\\
\displaystyle\exp(-\frac{x^2}{l_{21}^2})\\
\displaystyle\exp(-\frac{x^2}{l_{22}^2})
\end{matrix}
\right),\;
V= \{v(x)|x \in X\},
\eea
there exists nonzero vector which is orthogonal to all vectors in $V$, which means this vector is orthogonal to all vectors in $\mathcal{C}(V)$. Accordingly, rank $\mathcal{C}(V) \le 3$ should be held. We use the same logic as the proof of Theorem \ref{thm:RBF-Periodic}. 
We first show that if $l_{11}, l_{12}, l_{21}, l_{22}$ are mutually different, rank $\mathcal{C}(V) = 4$. Including the case of $l_{11} = l_{22}$ or $l_{21} = l_{22}$, when only one pair of $\{l_{11}, l_{12}, l_{21}, l_{22}\} $ is duplicated, we show that (\ref{eqn:2-RBF equation}) cannot be satisfied. 
Finally, if $l_{11} = l_{21}$ and $l_{12} = l_{22}$, we show that this implies $\sigma_{11} = \sigma_{21}$ and $\sigma_{12} = \sigma_{22}$, which means that the model is identifiable. 

For the proof, we need following lemmas.

\begin{lemma}\label{lemma: det not 0}
Let 
\bea
B(\zeta_1, \zeta_2, \zeta_3;a, b) = 
\left(
\begin{matrix}
\zeta_1-1 & \zeta_1^a-1 & \zeta_1^b-1\\
\zeta_2-1 & \zeta_2^a-1 & \zeta_2^b-1\\
\zeta_3-1 & \zeta_3^a-1 & \zeta_3^b-1
\end{matrix}
\right).
\eea
If $b>a>1,\; \zeta_1,\zeta_2,\zeta_3 > 1,$ and $ (\zeta_1-\zeta_2)(\zeta_2-\zeta_3)(\zeta_3-\zeta_1) \neq 0$ holds, $\det B \neq 0$.
\end{lemma}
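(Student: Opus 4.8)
The plan is to read $\det B = 0$ as a linear-dependence statement and then control the number of zeros of the resulting combination by Rolle's theorem. Write $f_1(\zeta) = \zeta - 1$, $f_2(\zeta) = \zeta^a - 1$, $f_3(\zeta) = \zeta^b - 1$, so that $B = [f_j(\zeta_i)]$. Since $(\zeta_1-\zeta_2)(\zeta_2-\zeta_3)(\zeta_3-\zeta_1)\neq 0$ the three abscissae are distinct, and after a reordering (which only flips the sign of the determinant) I may assume $1 < \zeta_1 < \zeta_2 < \zeta_3$. Now $\det B = 0$ exactly when the columns are dependent at these points, i.e.\ when there is a nonzero triple $(c_1,c_2,c_3)$ with
$$g(\zeta) := c_1 f_1(\zeta) + c_2 f_2(\zeta) + c_3 f_3(\zeta) = c_1(\zeta-1) + c_2(\zeta^a-1) + c_3(\zeta^b-1)$$
satisfying $g(\zeta_1)=g(\zeta_2)=g(\zeta_3)=0$. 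The goal is to show this forces $(c_1,c_2,c_3)=0$.

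The crucial structural feature is that each $f_j$ is designed to vanish at $\zeta=1$, so $g(1)=0$ automatically; thus $g$ has the four distinct roots $1<\zeta_1<\zeta_2<\zeta_3$ on $(0,\infty)$. Applying Rolle to the smooth function $g$ gives at least three zeros of $g'$, and a second application gives at least two zeros of $g''$ in $(1,\infty)$. I would then compute
$$g''(\zeta) = c_2\,a(a-1)\,\zeta^{a-2} + c_3\,b(b-1)\,\zeta^{b-2} = \zeta^{a-2}\bigl(c_2\,a(a-1) + c_3\,b(b-1)\,\zeta^{b-a}\bigr),$$
noting that the factor $\zeta^{a-2}$ is strictly positive on $(0,\infty)$. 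Because $b>a>1$, both coefficients $a(a-1)$ and $b(b-1)$ are strictly positive and the exponent $b-a$ is strictly positive, so the bracketed term is either strictly monotone (when $c_3\neq 0$) or a nonzero constant (when $c_3=0$, $c_2\neq 0$); in either case it has at most one positive zero, contradicting the two zeros of $g''$ found above unless $(c_2,c_3)=(0,0)$.

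It then remains to clear the degenerate branch: if $(c_2,c_3)=(0,0)$ then $g(\zeta)=c_1(\zeta-1)$ vanishes only at $\zeta=1$ unless $c_1=0$, so the existence of three roots $\zeta_i>1$ forces $c_1=0$. Hence $(c_1,c_2,c_3)=(0,0,0)$, the columns are independent, and $\det B\neq 0$. I expect the main obstacle to be bookkeeping rather than conceptual: keeping the Rolle count airtight (four genuinely distinct roots in an interval on which $g$ is $C^\infty$, yielding the claimed multiplicities for $g'$ and $g''$) and verifying that the hypotheses $a>1$ and $b>a$ are precisely what make $a(a-1)>0$, $b(b-1)>0$, and $b-a>0$ hold, since these are exactly the sign conditions that collapse $g''$ to at most one zero.
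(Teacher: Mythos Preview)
Your proof is correct and follows essentially the same route as the paper: both introduce the auxiliary function $h(t)=c_1(t-1)+c_2(t^a-1)+c_3(t^b-1)$, exploit the automatic root at $t=1$ to obtain four distinct zeros, and then apply Rolle/the mean value theorem to force $(c_1,c_2,c_3)=0$. Your execution is a touch more streamlined---you differentiate twice immediately and analyze $g''$, whereas the paper differentiates once and then splits into cases on whether $u_1=0$ before differentiating again---but the underlying idea is identical.
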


\begin{proof}
Assume $\det B = 0$. Then there exists $(u_1, u_2, u_3) \neq (0, 0, 0)$ such that
\bea
u_1
\left(
\begin{matrix}
\zeta_1-1\\
\zeta_2-1\\
\zeta_3-1
\end{matrix}
\right)
+u_2
\left(
\begin{matrix}
\zeta_1^a-1\\
\zeta_2^a-1\\
\zeta_3^a-1
\end{matrix}
\right)
+u_3
\left(
\begin{matrix}
\zeta_1^b-1\\
\zeta_2^b-1\\
\zeta_3^b-1
\end{matrix}
\right)
=0.
\eea
When we define 
\begin{equation}\label{eqn:h}
    h(t) = u_1(t-1) + u_2(t^a-1) + u_3(t^b-1),
\end{equation}
$\zeta_1, \zeta_2, \zeta_3$ are mutually different roots of $h(t) = 0$. With $h(1) = 0$, $h(t)=0$ has 4 mutually different roots on $\{ t \ge 1 \}$.
By mean value theorem, 
\begin{equation}\label{eqn:h prime}
    \displaystyle h^{\prime}(t) = u_1 + au_2 t^{a-1}+ bu_3 t^{b-1} = 0
\end{equation}
has at least 3 different roots on $\{ t > 1\}$. By dividing into cases, we prove this cannot be the case. 

Case I. Suppose $u_1=0, u_2=0$. $u_3 \neq 0$ not to make $(u_1, u_2, u_3)$ zero vector. However, according to (\ref{eqn:h prime}), $h^{\prime}(t) = bu_3 t^{b-1}$, which means $h^{\prime}(t) \neq 0$ 0 on $\{t>=1\}$.
This also implies that $u_1 = 0$ and $u_3 = 0$ should not be satisfied simultaneously.

Case II. Suppose $u_1 = 0, u_2 \neq 0$ and $u_3 \neq 0$. According to (\ref{eqn:h prime}),
\bea
\displaystyle h^{\prime}(t) = au_2t^{a-1}(1+\frac{bu_3}{au_2}t^{b-a})
\eea
holds. Define 
\bea
\displaystyle P_1(t) = 1+\frac{bu_3}{au_2}t^{b-a}.
\eea
 Then $P_1(t)=0$ should have at least 3 roots on $\{t \ge 1\}$. This contradicts the fact that $P_1(t)$ is monotone function on $\{t \ge 1\}$, since 
\bea
\displaystyle P_1^{\prime}(t) = \frac{b(b-a)u_3}{au_2} t^{b-a-1}
\eea
does not change its sign on $\{t \ge 1\}$.

Case III. Suppose $u_1 \neq 0$. By dividing \ref{eqn:h prime} by $u_1$,
\bea
\displaystyle h^\prime (t) = u_1(1+\frac{au_2}{u_1}t^{a-1} + \frac{bu_3}{u_1}t^{b-1})
\eea
holds. Define 
\bea
\displaystyle P_2(t) = 1+\frac{au_2}{u_1}t^{a-1} + \frac{bu_3}{u_1}t^{b-1}.
\eea
Then $P_2(t) = 0$ should have at least 3 different roots on $\{t>1\}$. If $u_2=u_3=0$,
\bea
h(t) = u_1(t-1) + u_2(t^a-1) + u_3(t^b-1) = u_1(t-1)
\eea
cannot have 4 different roots on $\{t \ge 1\}$. Similarly, if $u_2 = 0$ or $u_3 = 0$, $P_2^\prime(t)$ cannot be 0 on $\{t>1\}$, which implies that $P_2$ is monotone, and cannot have 4 different roots on $\{t \ge 1\}$. 
Suppose $u_1u_2u_3 \neq 0$. Applying mean value theorem again, $P_2^{\prime}(t) = 0$ should have at least 2 different roots on $\{t>1\}$. Denote the smallest root and the second smallest root as $c_1, c_2$ respectively. Note
\bea
P_2^{\prime}(t) = \frac{a(a-1)u_2}{u_1}t^{a-2} +\frac{b(b-1)u_3}{u_1}t^{b-2},
\eea
which can be written as
\bea
P_2^{\prime}(t) = \frac{a(a-1)u_2}{u_1}t^{a-2}(1+\frac{b(b-1)u_3}{a(a-1)u_2}t^{b-a}).
\eea
Define 
\bea
P_3(t) = 1+\frac{b(b-1)u_3}{a(a-1)u_2}t^{b-a}.
\eea
To satisfy $P_2^{\prime}(c_1) = P_2^{\prime}(c_2)=0$, $P_3(c_1) = P_3(c_2) = 0$ should hold. However, $P_3(t)$ is a monotone function, which contradicts above. Therefore, the assumption is wrong, which concludes that $\det B \neq 0$.
\end{proof}

\begin{lemma}\label{lemma:mainstream_2}
Suppose that $l_{11}, l_{12}, l_{21}, l_{22}$ are mutually different. When we define 
\bea
v(x) = \left(
\begin{matrix}
\displaystyle\exp(-\frac{x^2}{l_{11}^2})\\
\displaystyle\exp(-\frac{x^2}{l_{12}^2})\\
\displaystyle\exp(-\frac{x^2}{l_{21}^2})\\
\displaystyle\exp(-\frac{x^2}{l_{22}^2})
\end{matrix}
\right),
\eea
and $V = \{v(x)|x \in X\}$, rank $\mathcal{C}(V) = 4$.
\end{lemma}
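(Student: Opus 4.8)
The plan is to exhibit four elements of $X$ whose images under $v$ are linearly independent; since every $v(x)\in\mathbb{R}^4$ and all these vectors lie in $\mathcal{C}(V)$, this forces $\mathrm{rank}\,\mathcal{C}(V)=4$. Because $0\in X$ (take $i=j$) and $|X|\ge 4$, I can select $0$ together with three distinct positive values $a_1<a_2<a_3$ from $X$. Writing $\beta_r=1/l_r^2$ for the four distinct reciprocal squared length scales, I will show that the $4\times4$ matrix $M$ whose columns are $v(0),v(a_1),v(a_2),v(a_3)$ satisfies $\det M\neq 0$; its $(r,c)$ entry is $\exp(-a_{c-1}^2\beta_r)$ under the convention $a_0=0$.

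First I would reduce the $4\times4$ determinant to a $3\times3$ determinant of the kind handled by Lemma \ref{lemma: det not 0}. Subtracting the first column (all ones, coming from $v(0)$) from the remaining three turns the entries in columns $2,3,4$ into $\exp(-a_j^2\beta_r)-1$. Crucially, I would order the rows so that the first row corresponds to the smallest length scale, equivalently the largest $\beta$, which I call $\beta_1=\max_r\beta_r$. Subtracting this first row from the other three clears the first column below the top entry, and expanding along that column leaves the $3\times3$ matrix with entries $\exp(-a_j^2\beta_r)-\exp(-a_j^2\beta_1)$ for $r\in\{2,3,4\}$ and $j\in\{1,2,3\}$.

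Next I would factor and substitute so as to land exactly on the hypotheses of Lemma \ref{lemma: det not 0}. Pulling the positive factor $\exp(-a_j^2\beta_1)$ out of the $j$th column, each entry becomes $\exp\!\big(a_j^2(\beta_1-\beta_r)\big)-1$. Setting $\zeta_r=\exp\!\big(a_1^2(\beta_1-\beta_r)\big)$ for $r=2,3,4$ together with $a=a_2^2/a_1^2$ and $b=a_3^2/a_1^2$, the remaining determinant is precisely $\det B(\zeta_2,\zeta_3,\zeta_4;a,b)$. The choice $\beta_1=\max_r\beta_r$ guarantees $\beta_1-\beta_r>0$, hence $\zeta_r>1$; distinctness of the $\beta_r$ gives distinct $\zeta_r$; and $0<a_1<a_2<a_3$ gives $1<a<b$. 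Thus Lemma \ref{lemma: det not 0} applies and yields $\det B\neq0$, so $\det M\neq0$, the four vectors are linearly independent, and $\mathrm{rank}\,\mathcal{C}(V)=4$.

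The step I expect to be the main obstacle is the bookkeeping that makes Lemma \ref{lemma: det not 0} applicable on the nose. The substitution $\zeta_r=\exp\!\big(a_1^2(\beta_1-\beta_r)\big)$ produces values exceeding $1$ only because the subtracted reference row is chosen to be the one with the largest $\beta$ (smallest $l$); had I subtracted an arbitrary row, the $\zeta_r$ could fall in $(0,1)$ and the lemma as stated would not apply directly. Hence the genuinely non-trivial idea is the selection of the reference length scale, after which the reduction is just routine column and row operations together with a column scaling.
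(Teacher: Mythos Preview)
Your argument is correct and follows essentially the same route as the paper: both select $0<a_1<a_2<a_3$ from $X$, fix the reference row to be the one with the smallest length scale (largest $\beta_r$) so that the resulting $\zeta$'s exceed $1$, reduce the $4\times4$ determinant to the $3\times3$ matrix $B(\zeta_2,\zeta_3,\zeta_4;a,b)$ with $a=a_2^2/a_1^2$, $b=a_3^2/a_1^2$, and then invoke Lemma~\ref{lemma: det not 0}. The only cosmetic difference is the order of the elementary row/column operations and a transposition of the matrix, neither of which affects the argument.
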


\begin{proof}
According to (\ref{eqn:2-RBF condition}), there exists
\bea
a_1, a_2, a_3 \in X \text{ such that }a_3>a_2>a_1>0.
\eea
Let
\bea
a = \frac{a_2}{a_1},\;b = \frac{a_3}{a_1},\;\alpha = \exp(-\frac{a_1^2}{l_{11}^2}),
\;\beta = \exp(-\frac{a_1^2}{l_{12}^2}), 
\;\gamma = \exp(-\frac{a_1^2}{l_{21}^2}), 
\;\delta = \exp(-\frac{a_1^2}{l_{22}^2}).
\eea
Since $l_{11}<l_{12}$ and $l_{21}<l_{22}$, without loss of generality, suppose
\bea
\min(l_{11}, l_{12}, l_{21}, l_{22}) = l_{11}.
\eea
For
\bea
v(0) = \left(
\begin{matrix}
1\\
1\\
1\\
1
\end{matrix}
\right), 
v(a_1) = \left(
\begin{matrix}
\alpha\\
\beta\\
\gamma\\
\delta
\end{matrix}
\right), 
v(a_2) = \left(
\begin{matrix}
\alpha^a\\
\beta^a\\
\gamma^a\\
\delta^a
\end{matrix}
\right), 
v(a_3) = \left(
\begin{matrix}
\alpha^b\\
\beta^b\\
\gamma^b\\
\delta^b
\end{matrix}
\right),
\eea
$\mathcal{C}(\{v(0), v(a_1), v(a_2), v(a_3)\}) \subset \mathcal{C}(V)$. Therefore, it is enough to show that 
\bea
\text{rank }\mathcal{C}(\{v(0), v(a_1), v(a_2), v(a_3)\}) = 4.
\eea
Define
\bea
A = \left(
\begin{matrix}
1&1&1&1\\
\alpha&\beta&\gamma&\delta\\
\alpha^a&\beta^a&\gamma^a&\delta^a\\
\alpha^b&\beta^b&\gamma^b&\delta^b
\end{matrix}
\right).
\eea
It is equivalent to show that $\det A \neq 0$.
Define 
\bea
\zeta_1 =\frac{\beta}{\alpha},\;\zeta_2 = \frac{\gamma}{\alpha},\; \zeta_3 = \frac{\delta}{\alpha}.
\eea
Since
\bea
\min(l_{11}, l_{12}, l_{21}, l_{22}) = l_{11},\; \min(\alpha, \beta, \gamma, \delta) = \alpha,
\eea
$\zeta_1, \zeta_2, \zeta_3 > 1$ holds. Also,
\bea
\det A = \alpha^{1+a+b} 
\begin{vmatrix}
1&1&1&1\\
1&\zeta_1&\zeta_2&\zeta_3\\
1&\zeta_1^a&\zeta_2^a&\zeta_3^a\\
1&\zeta_1^b&\zeta_2^b&\zeta_3^b
\end{vmatrix} = 
\begin{vmatrix}
1&1&1&1\\
0&\zeta_1-1&\zeta_2-1&\zeta_3-1\\
0&\zeta_1^a-1&\zeta_2^a-1&\zeta_3^a-1\\
0&\zeta_1^b-1&\zeta_2^b-1&\zeta_3^b-1
\end{vmatrix} = \det B
\eea
holds when we define
\bea
B = \left(
\begin{matrix}
\zeta_1-1&\zeta_2-1&\zeta_3-1\\
\zeta_1^a-1&\zeta_2^a-1&\zeta_3^a-1\\
\zeta_1^b-1&\zeta_2^b-1&\zeta_3^b-1
\end{matrix}
\right).
\eea
By Lemma \ref{lemma: det not 0}, $\det B \neq 0$, which means that $\det A \neq 0$. Therefore, rank $\mathcal{C}(V) = 4$.
\end{proof}
Consequently, to satisfy (\ref{eqn:GGGG}),
$l_{11}, l_{12}, l_{21}, l_{22}$ should not be mutually different.

\begin{lemma}\label{lemma: supplement_2RBF}
For mutually different real numbers $a, b, c>0$, define 
\bea
w(x) = \left(
\begin{matrix}
\displaystyle\exp(-\frac{x^2}{a^2})\\
\displaystyle\exp(-\frac{x^2}{b^2})\\
\displaystyle\exp(-\frac{x^2}{c^2})
\end{matrix}
\right).
\eea
Then for mutually different real numbers $x_1, x_2, x_3>0,\; \{w(x_1), w(x_2), w(x_3)\}$ is a linearly independent set.
\end{lemma}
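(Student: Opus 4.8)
The plan is to prove the stronger determinantal statement that the $3\times 3$ matrix $W = [\,w(x_1),\,w(x_2),\,w(x_3)\,]$ is nonsingular, from which linear independence of $\{w(x_1),w(x_2),w(x_3)\}$ follows immediately. I would argue by contradiction, converting a hypothetical dependence into a root-counting statement about a single exponential sum, in the same spirit as the proof of Lemma \ref{lemma: det not 0}.

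First I would set $\mu_1 = 1/a^2$, $\mu_2 = 1/b^2$, $\mu_3 = 1/c^2$ (mutually distinct and positive) and $t_j = x_j^2$ (mutually distinct and positive, since the $x_j>0$ are distinct). If $\{w(x_1),w(x_2),w(x_3)\}$ were dependent, then $\det W = 0$, so its rows are dependent: there is $(c_1,c_2,c_3)\neq(0,0,0)$ with $c_1 e^{-\mu_1 t_j} + c_2 e^{-\mu_2 t_j} + c_3 e^{-\mu_3 t_j} = 0$ for $j=1,2,3$. Hence the function $\phi(t) = c_1 e^{-\mu_1 t} + c_2 e^{-\mu_2 t} + c_3 e^{-\mu_3 t}$ has at least the three distinct zeros $t_1,t_2,t_3$.

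The key step is to show that a nonzero combination of the three distinct exponentials $e^{-\mu_1 t}, e^{-\mu_2 t}, e^{-\mu_3 t}$ can have at most two real zeros, contradicting the three zeros above. I would factor $\phi(t) = e^{-\mu_1 t}\,\psi(t)$ with $\psi(t) = c_1 + c_2 e^{-(\mu_2-\mu_1)t} + c_3 e^{-(\mu_3-\mu_1)t}$; since $e^{-\mu_1 t}>0$, the functions $\phi$ and $\psi$ share zeros, so $\psi$ would have at least three zeros and, by the mean value theorem, $\psi'$ would have at least two. But $\psi'(t) = -c_2(\mu_2-\mu_1)e^{-(\mu_2-\mu_1)t} - c_3(\mu_3-\mu_1)e^{-(\mu_3-\mu_1)t}$ is a combination of two distinct exponentials (the exponents differ because $\mu_2\neq\mu_3$), and by Lemma \ref{lemma:2 exp vectors} two distinct exponentials with nonzero coefficients cannot vanish at two distinct points, so $\psi'$ has at most one zero, a contradiction.

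The step I expect to require the most care is the degenerate bookkeeping in this root count. If exactly one of $c_2,c_3$ vanishes, then $\phi$ reduces to two distinct exponentials, which by Lemma \ref{lemma:2 exp vectors} has at most one zero and cannot carry three; if $c_2=c_3=0$, then $c_1\neq0$ forces $\phi = c_1 e^{-\mu_1 t}$, which is nowhere zero. Verifying that these subcases are all covered, and that $(c_1,c_2,c_3)\neq 0$ indeed makes $\phi$ not identically zero, is the only delicate part; everything else is a direct application of the mean value theorem and Lemma \ref{lemma:2 exp vectors}. Once the at-most-two-zeros bound is established, it contradicts the existence of the three distinct zeros $t_1,t_2,t_3$, so no nontrivial dependence exists and $\{w(x_1),w(x_2),w(x_3)\}$ is linearly independent.
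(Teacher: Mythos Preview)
Your argument is correct and is genuinely different from the paper's. The paper proceeds by explicit determinant manipulation: after ordering $0<x_1<x_2<x_3$ and $a>b>c$, it writes the entries as powers $\alpha,\beta,\gamma$ and $\alpha^A,\beta^A,\gamma^A$ etc., factors the $3\times 3$ determinant down to a $2\times 2$ determinant in the variables $(\beta/\alpha)^{A-1},(\beta/\alpha)^{B-1},(\gamma/\alpha)^{A-1},(\gamma/\alpha)^{B-1}$, and then invokes a separate Lemma~\ref{lemma:last} which shows, via a monotonicity computation for $f(x)=(x^B-x^A)/(x^A-1)$, that this $2\times 2$ determinant is nonzero. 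Your route is the classical root-count for exponential sums: a nontrivial combination of three distinct exponentials has at most two real zeros, proved by factoring out one exponential and applying Rolle's theorem to reduce to a two-term sum handled by Lemma~\ref{lemma:2 exp vectors}. Your approach is shorter, avoids the auxiliary Lemma~\ref{lemma:last} entirely, and is exactly parallel in spirit to the paper's own proof of Lemma~\ref{lemma: det not 0}; the paper's approach, on the other hand, keeps everything in terms of explicit algebraic identities, which fits its overall style of tracking determinants concretely. Your handling of the degenerate cases ($c_2=0$ or $c_3=0$) is also correct and complete.
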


\begin{proof}
Without loss of generality, suppose
\bea
0<x_1<x_2<x_3, a>b>c.
\eea
Define 
\bea
\alpha = \exp(-\frac{x_1^2}{a^2}), \beta = \exp(-\frac{x_1^2}{b^2}),   \gamma = \exp(-\frac{x_1^2}{c^2}), A = \frac{x_2^2}{x_1^2}, B = \frac{x_3^2}{x_1^2}.
\eea
Then
\bea
\alpha>\beta>\gamma, 1<A<B
\eea
holds. Define matrix C as
\bea
C = (w(x_1), w(x_2), w(x_3)) = 
\left(
\begin{matrix}
\alpha& \alpha^A& \alpha^B\\
\beta& \beta^A& \beta^B\\
\gamma& \gamma^A& \gamma^B\\
\end{matrix}
\right).
\eea
Then it is equivalent to show that $\det C \neq 0$. In addition,
\bea
\det C = 
\alpha\beta\gamma
\begin{vmatrix}
1&\alpha^{A-1}&\alpha^{B-1}\\
1&\beta^{A-1}&\beta^{B-1}\\
1&\gamma^{A-1}&\gamma^{B-1}
\end{vmatrix} &=& \alpha^{A+B-1}\beta\gamma
\begin{vmatrix}
1&1&1\\
1&(\displaystyle\frac{\beta}{\alpha})^{A-1}&(\displaystyle\frac{\beta}{\alpha})^{B-1}\\
1&(\displaystyle\frac{\gamma}{\alpha})^{A-1}&(\displaystyle\frac{\gamma}{\alpha})^{B-1}
\end{vmatrix}\\\\
&=&\alpha^{A+B-1}\beta\gamma
\begin{vmatrix}
1&1&1\\
0&(\displaystyle\frac{\beta}{\alpha})^{A-1}-1&(\displaystyle\frac{\beta}{\alpha})^{B-1}-1\\
0&(\displaystyle\frac{\gamma}{\alpha})^{A-1}-1&(\displaystyle\frac{\gamma}{\alpha})^{B-1}-1
\end{vmatrix}
\eea
holds. According to the following lemma, when we set $\displaystyle x_1 = \frac{\beta}{\alpha}$ and $\displaystyle x_2 = \frac{\gamma}{\alpha}$, 
$\det C = 0$ is equivalent to the fact that $x_1 = x_2$, which contradicts the assumption. This concludes the proof.
\end{proof}

\begin{lemma}\label{lemma:last}
for $0<x_1, x_2< 1$, if $x_1 \neq x_2$ and $1<A<B$, 
\bea
\begin{vmatrix}
x_1^{A-1}-1 & x_1^{B-1}-1 \\
x_2^{A-1}-1 & x_2^{B-1}-1
\end{vmatrix} \neq 0.
\eea
\end{lemma}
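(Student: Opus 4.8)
My plan is to reduce the non-vanishing of the determinant to the injectivity of a single one-variable function, and then to prove that injectivity by a monotonicity argument. Writing $D$ for the displayed determinant and expanding it gives
\[
D = (x_1^{A-1}-1)(x_2^{B-1}-1) - (x_1^{B-1}-1)(x_2^{A-1}-1).
\]
Since $0<x_i<1$ and $B-1>0$, each factor $x_i^{B-1}-1$ lies in $(-1,0)$ and is in particular nonzero, so I can divide the equation $D=0$ through by the positive product $(x_1^{B-1}-1)(x_2^{B-1}-1)$. This shows that $D=0$ is equivalent to $\phi(x_1)=\phi(x_2)$, where
\[
\phi(x) = \frac{x^{A-1}-1}{x^{B-1}-1}, \qquad 0<x<1.
\]
Thus it suffices to prove that $\phi$ is injective on $(0,1)$: then $\phi(x_1)=\phi(x_2)$ forces $x_1=x_2$, contradicting $x_1\neq x_2$, and hence $D\neq 0$.

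To establish injectivity I would substitute $x=e^{-t}$ with $t>0$, which is a strictly decreasing bijection of $(0,1)$ onto $(0,\infty)$, and set $a=A-1$, $b=B-1$ so that $0<a<b$. Then $\phi(e^{-t})=\psi(t)$ with
\[
\psi(t) = \frac{1-e^{-at}}{1-e^{-bt}}, \qquad t>0,
\]
and it is enough to show $\psi$ is strictly monotone. Differentiating $\psi$ as a quotient, the sign of $\psi'(t)$ equals the sign of the numerator
\[
a e^{-at}(1-e^{-bt}) - b e^{-bt}(1-e^{-at}) = a e^{-at} - b e^{-bt} + (b-a)e^{-(a+b)t}.
\]
Multiplying this quantity by the positive factor $e^{(a+b)t}$ leaves its sign unchanged and collapses it to the much simpler two-term exponential
\[
M(t) := a e^{bt} - b e^{at} + (b-a).
\]

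The key step is then to show $M(t)>0$ for all $t>0$. I would do this by noting first that $M(0)=a-b+(b-a)=0$, and then computing $M'(t)=ab\bigl(e^{bt}-e^{at}\bigr)$, which is strictly positive for $t>0$ because $b>a$ forces $e^{bt}>e^{at}$. Hence $M$ is strictly increasing on $(0,\infty)$ and, starting from the value $0$ at $t=0$, stays positive; consequently $\psi'(t)>0$, so $\psi$ is strictly increasing, $\phi$ is strictly decreasing, and injectivity follows. The only genuine obstacle is the monotonicity of $\psi$, and the device that removes it is precisely the exponential substitution followed by multiplication by $e^{(a+b)t}$: this turns the sign analysis of an awkward rational-exponential derivative (a three-term exponential sum) into the elementary observation that $M$ vanishes at $0$ and has a manifestly positive derivative. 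Without that simplification one would be left staring at the sign of $a e^{-at} - b e^{-bt} + (b-a)e^{-(a+b)t}$ directly, which is far less transparent.
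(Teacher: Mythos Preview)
Your proof is correct and follows essentially the same strategy as the paper: reduce $D=0$ to an equality $F(x_1)=F(x_2)$ for a single-variable function, then establish injectivity by showing that an auxiliary function vanishes at an endpoint and has a derivative of definite sign. The only cosmetic difference is that the paper divides by the $(A-1)$-factors and works with $f(x)=\dfrac{x^{B-1}-x^{A-1}}{x^{A-1}-1}$ directly in the variable $x\in(0,1)$, obtaining $g(x)=(B-A)x^B-Bx^{B-A}+A$ with $g(1)=0$ and $g'(x)<0$, whereas your exponential substitution $x=e^{-t}$ yields the equivalent but slightly cleaner $M(t)=ae^{bt}-be^{at}+(b-a)$ with $M(0)=0$ and $M'(t)=ab(e^{bt}-e^{at})>0$; the two auxiliary functions are structurally the same object in different coordinates.
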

\begin{proof}
\bea
\begin{vmatrix}
x_1^{A-1}-1 & x_1^{B-1}-1 \\
x_2^{A-1}-1 & x_2^{B-1}-1
\end{vmatrix} =0
\eea
can be written as
\begin{equation}\label{eqn:expanded form}
x_1^{A-1}x_2^{B-1}-x_1^{B-1}x_2^{A-1}-x_1^{A-1}+x_1^{B-1}-x_2^{B-1}+x_2^{A-1} = 0.
\end{equation}
Since 
\bea
x_1^{A-1}-1 \neq 0,\; x_2^{A-1}-1 \neq 0,
\eea
(\ref{eqn:expanded form}) can be written as
\bea
\frac{x_1^{A-1}}{x_1^{A-1}-1}(x_1^{B-A}-1) = \frac{x_2^{A-1}}{x_2^{A-1}-1}(x_2^{B-A}-1).
\eea
When we define 
\bea
f(x) = \frac{x^B-x^A}{x^A-1},
\eea
$f(x_1) = f(x_2)$ holds. Since 
\bea
f^{\prime}(x) = \frac{x^{A-1}}{(x^A-1)^2}((B-A)x^B-Bx^{B-A}+A),
\eea
define 
\bea
g(x) = ((B-A)x^B-Bx^{B-A}+A).
\eea
Note
\bea
g^\prime(x) = (B-A)Bx^{B-A-1}(x^A-1).
\eea
Therefore, on $0<x<1$, $g(x)$ is monotonely decreasing, and $g(1) = 0$ holds, which implies that on  $0<x<1,\;g(x) > 0$. Namely, if $0<x<1$ and $1<A<B$, 
\bea
(B-A)x^B-Bx^{B-A} + A > 0 
\eea
stands. Therefore, since $f^\prime(x) > 0$ on $0<x<1$, $f$ is monotonely increasing. Consequently, for $0<x_1, x_2<1$, $f(x_1) = f(x_2)$ implies $x_1 = x_2$.
By Lemma \ref{lemma: supplement_2RBF}, for the case in which only one pair in $\{l_{ll}, l_{12}, l_{21}, l_{22}\}$ is duplicated, (\ref{eqn:GGGG})
cannot be satisfied.

Therefore, to satisfy (\ref{eqn:GGGG}), 
\bea
(l_{11}, l_{12}) = (l_{21}, l_{22}) 
\eea
should be satisfied. By substituting the value of $\lVert x_i-x_j \rVert$ in (\ref{eqn:2-RBF equation}),
$\sigma_{11} = \sigma_{21}$ and $\sigma_{12} = \sigma_{22}$ is obtained.
This concludes the proof of Theorem \ref{thm:2-RBF}.
\end{proof}
%
%
%%%%%%%%%%%%%%%%%%%%%%%%%%%%%%%%%%%%%%%%%%%%%%%%%%%%%%%%%%%%%%%%%%%%%%%%%%%%%%%%%%%%%%%%%%%%%%%%%%%%%%%%%%%%%%%%%%

\section{Numerical Examples}\label{sec:num_ex}

\subsection{RBF-Periodic mixed kernel GPR model}

\subsubsection{no \texorpdfstring{$x \in X$}{Lg} which satisfies \texorpdfstring{$\displaystyle\frac{x}{p} \notin \mathbb{N}$}{Lg}}

In this example, we set $p=7$. Suppose the set of observed covariates is $\{x_i:1\le i \le n\} = \{1, 8, 15, ...., 7n-6\},\; n \ge 3$. In this case, if $\sigma_1 = \sigma_2$, $l_1 = l_2,$ $\tau_1 = \tau_2$ and $s_1 \neq s_2$, then
\bea
G_{RBF}(\theta_1) + G_{period}(\eta_1) = G_{RBF}(\theta_2) + G_{period}(\eta_2) 
\eea
is always satisfied. As as specific example, consider
\bea
\{x_i:1\le i \le 6\} = \{1, 8, 15, 22, 29, 36\}.
\eea
If the mixed kernel is
\bea
\sigma_1=\sigma_2=l_1=l_2=\tau_1=\tau_2=1,
s_1 =1 \text{ and }s_2 =2,
\eea
then
\bea
G = G_{RBF}(\theta_1) + G_{period}(\eta_1) = G_{RBF}(\theta_2) + G_{period}(\eta_2) = 
\left(
\begin{matrix}
2&1&1&1&1&1\\
1&2&1&1&1&1\\
1&1&2&1&1&1\\
1&1&1&2&1&1\\
1&1&1&1&2&1\\
1&1&1&1&1&2\\
\end{matrix}
\right).
\eea
This is a non-identifiable case where $X$ does not contain any element $x$ which satisfies $\displaystyle \frac{x}{p} \notin \mathbb{N}$.

\subsubsection{no \texorpdfstring{$x \in X$}{Lg} which satisfies \texorpdfstring{$\displaystyle\frac{x}{p} \in \mathbb{N}$}{Lg}}
We consider the case that $\{x_i:1\le i \le n\}$ doesn’t contain any element $x$ such that $\displaystyle \frac{x}{p} \in \mathbb{N}$. Suppose that $p = 4$.
When $X = \{0, 1, 2, 3\}$, X doesn’t contain any element x which satisfies $\displaystyle \frac{x}{p} \in \mathbb{N}$.
For
\bea
\sigma_1=1.5720871, l_1=1.0045602, \tau_1 = 1.4284245, s_1 = 1.2011224\\
\sigma_2=1.2295748, l_2=1.4468554, \tau_2 = 1.7320508, s_2 = 0.9540646
\eea
$G_{RBF}(\theta_{1}) + G_{period}(\eta_{1}) = G_{RBF}(\theta_{2}) + G_{period}(\eta_{2})$ holds. Define
\bea
v(x) = 
\left(
\begin{matrix}
\displaystyle\exp(-\frac{x^2}{l_1^2})\\
\displaystyle\exp(-\frac{x^2}{l_2^2})\\
\displaystyle\exp(-\frac{\cos(2\pi x/p)}{s_1^2})\\
\displaystyle\exp(-\frac{\cos(2\pi x/p)}{s_2^2})
\end{matrix}
\right).
\eea
The parameters above are obtained by solving the relation below analytically: 
\bea
v(0)-3v(1)+6v(2)-2v(3) = (0, 0, 0, 0).
\eea
Note in this case,
\bea
G=G_{RBF}(\theta_1)+G_{period}(\eta_1) =
G_{RBF}(\theta_2)+G_{period}(\eta_2) =
\eea
\bea
\left(
\begin{matrix}
4.5118543 & 1.9376702 & 0.5570357 & 1.0205292\\
1.9376702 & 4.5118543 & 1.9376702 & 0.5570357\\
0.5570357 & 1.9376702 & 4.5118543 & 1.9376702\\
1.0205292 & 0.5570357 & 1.9376702 & 4.5118543
\end{matrix}
\right).
\eea

\subsection{2-RBF mixed kernel GPR model}
For 2-RBF mixed kernel GPR model in 3-dimensional space, observations on 6 different points can even be insufficient for the identifiability. For example, assign 6 points coordinate as $(\pm1, \pm1, 0), (0, 0, \pm \sqrt2)$, and parameters as follows:
\bea
\sigma_{11} = 24, \sigma_{12} = 32\sqrt{15}, l_{11}=\frac{1}{\sqrt{\log4}}, l_{12} = \frac{1}{\sqrt{\log16}}\\
\sigma_{21} = 81, \sigma_{22} = 25\sqrt{15}, l_{11}=\frac{1}{\sqrt{\log9}}, l_{12} = \frac{1}{\sqrt{\log25}}
\eea
Then, 6 $\times$ 6 matrix $G_{RBF}(\theta_{11}) + G_{RBF}(\theta_{12})$ and $G_{RBF}(\theta_{21}) + G_{RBF}(\theta_{22})$ are identical and
\bea
G = G_{RBF}(\theta_{11}) + G_{RBF}(\theta_{12}) = G_{RBF}(\theta_{21}) + G_{RBF}(\theta_{22})=
\eea
\bea
\left(
\begin{matrix}
15936 & 1104 & 1104 & 96 & 1104 & 1104 \\
1104 & 15936 & 96 & 1104 & 1104 & 1104 \\
1104 & 96 & 15936 & 1104 & 1104 & 1104 \\
96 & 1104 & 1104 & 15936 & 1104 & 1104 \\
1104 & 1104 & 1104 & 1104 & 15936 & 96 \\
1104 & 1104 & 1104 & 1104 & 96 & 15936 \\
\end{matrix}
\right).
\eea
In terms of model interpretation, the model explained by $\theta_{11}$ and $\theta_{12}$ poses an emphasis on long term smoothness, and the model by $\theta_{21}$ and $\theta_{22}$ on the short term smoothness, relatively.
This non-identifiable situation occurs because $|X| = 3$.

\section{Discussion}\label{sec:disc}

In this paper, we define the identifiability of mixed kernel GPR models. 
Then we prove the theorems regarding the identifiability of two simple mixed GPR models, which are RBF-Periodic mixed GPR model and 2-RBF mixed GPR model. 
After analytic proofs, we give some numerical examples about non-identifiable cases regarding the models. Non-identifiability is derived from lack of satisfaction of corresponding conditions.   

\section*{Acknowledgements}

Jaeyong Lee was supported by the National Research Foundation of Korea (NRF) grants funded by the Korean government (MSIT) (No. 2018R1A2A3074973 and 2020R1A4A1018207).

%% ====================================================================


\begin{references}

\bibitem[Atienza et al.(2006)]{atienza2006new} Atienza, N., Garcia-Heras, J., Munoz-Pichardo, J. (2006). A new condition for identifiability
of finite mixture distributions, \textit{Metrika}, \textbf{63}, 215–221.

\bibitem[Barndorff-Nielsen(1965)]{barndorff1965identifiability} Barndorff-Nielsen, O. (1965). Identifiability of mixtures of exponential families, \textit{Journal of
Mathematical Analysis and Applications} \textbf{12}, 115–121.

\bibitem[Bousquet et al.(2011)]{bousquet2011advanced} Bousquet, O., von Luxburg, U., Rätsch, G. (2011). \textit{Advanced lectures on machine learning:
ML Summer Schools 2003, Canberra, Australia, February 2-14, 2003, Tübingen,
Germany, August 4-16, 2003, Revised Lectures}, \textbf{3176}, Springer.


\bibitem[Choi and Lee(2017)]{choi2017poisson} Choi, J., Lee, K. (2017). Poisson linear mixed models with arma random effects covariance
matrix, \textit{Journal of the Korean Data and Information Science Society}, \textbf{28}, 927–936.

\bibitem[Daemi et al.(2019)]{daemi2019identification} Daemi, A., Alipouri, Y., Huang, B. (2019). Identification of robust gaussian process regression
with noisy input using em algorithm. \textit{Chemometrics and Intelligent Laboratory Systems}, \textbf{191}, 1–11.
\bibitem[Gelman et al.(2013)]{gelman2013bayesian} Gelman, A., Carlin, J.B., Stern, H.S., Dunson, D.B., Vehtari, A., Rubin, D.B. (2013).
\textit{Bayesian data analysis}, CRC press.

\bibitem[Huang et al.(2014)]{huang2014estimating} Huang, M., Li, R., Wang, H., Yao, W. (2014). Estimating mixture of gaussian processes by
kernel smoothing, \textit{Journal of Business \& Economic Statistics}, \textbf{32}, 259–270.


\bibitem[Kostantinos(2000)]{kostantinos2000gaussian} Kostantinos, N. (2000). Gaussian mixtures and their applications to signal processing. \textit{Advanced
signal processing handbook: Theory and implementation for radar, sonar, and
medical imaging real time systems}, 1-35, CRC Press.

\bibitem[Li et al.(1963)]{li2020hybrid} Li, Z., Guo, F., Chen, L., Hao, K., Huang, B. (2020). Hybrid kernel approach to gaussian
process modeling with colored noises, \textit{Computers \& Chemical Engineering}, \textbf{143}, 107067.

\bibitem[Noh et al.(2018)]{noh2018analysis} Noh, M., Ok, Y.J., Na, M.H., Ng, C.T. (2018). Analysis of degradation data using double
hierarchical generalized linear model, \textit{The Korean Data \& Information Science Society}, \textbf{29}, 217–228.

\bibitem[Teicher(1963)]{teicher1963identifiability} Teicher, H. (1963). Identifiability of finite mixtures. \textit{The annals of Mathematical statistics},
1265–1269.

\bibitem[Wilson and Adams(2013)]{wilson2013gaussian} Wilson, A., Adams, R. (2013). Gaussian process kernels for pattern discovery and extrapolation. In \textit{International conference on machine learning}, 1067–1075, PMLR.

\bibitem[Yakowitz and Spragins(1968)]{yakowitz1968identifiability} Yakowitz, S.J., Spragins, J.D. (1968). On the identifiability of finite mixtures. \textit{The Annals of Mathematical Statistics}, 209–214.

\end{references}
\end{document}